\tikzset{
  treenode/.style = {shape=rectangle, rounded corners,
                     draw, align=center,
                     top color=white, bottom color=green!20},
  root/.style     = {treenode, font=\Large, bottom color=blue!30},
  env/.style      = {treenode, font=\ttfamily\normalsize},
  dummy/.style    = {circle,draw}
}
\author[A. Conner, F. Gesmundo, J.M. Landsberg, E. Ventura]{Austin Conner, Fulvio Gesmundo, Joseph M. Landsberg, and Emanuele Ventura}
\address[A. Conner, J.M. Landsberg]{Department of Mathematics, Texas A\&M University, College Station, TX 77843-3368, USA}
\email[A. Conner]{connerad@math.tamu.edu}
\email[J. M. Landsberg]{jml@math.tamu.edu}
\address[F. Gesmundo]{Max Planck Institute for Mathematics in the Sciences, Inselstrasse 22, 04103, Leipzig, Germany}
\email{gesmundo@mis.mpg.de}
\address[E. Ventura]{Universit\`{a} di Torino, Dipartimento di Matematica, via Carlo Alberto 10, 10123 Torino, Italy}
\email{emanuele.ventura@unito.it, emanueleventura.sw@gmail.com}
\title[Tensors with maximal symmetries]{Tensors with maximal symmetries}
\thanks{L. was supported by NSF grants  DMS-1405348 and AF-1814254 as well as a 
Simons Visiting Professor grant supplied by the Simons Foundation and by the 
Mathematisches Forschungsinstitut Oberwolfach. L. and V. were partially 
supported by the grant 346300 for IMPAN from the Simons Foundation and the 
matching 2015-2019 Polish MNiSW fund. G. acknowledges financial support from the 
European Research Council (ERC Grant Agreement no. 337603) and VILLUM FONDEN via 
the QMATH Centre of Excellence (Grant no. 10059).}
\keywords{Symmetry group; Matrix multiplication; Lie algebras; Bilinear forms; Tensor rank}
\subjclass[2010]{15A69, 68Q17, 14L30}
\newcommand{\ot}{\otimes}\newcommand{\ra}{\rightarrow}
\newcommand{\op}{\oplus}
\newcommand{\BC}{\mathbb{C}}
\newcommand{\utriv}{\textit{u-triv}}
\newcommand{\vvirg}{, \dots ,}
\newcommand{\ooplus}{\oplus \cdots \oplus}
\newcommand{\nnn}{\mathbf{n}}
\newcommand{\ep}{\epsilon}
\newcommand{\eps}{\epsilon}
\newcommand{\bbC}{\mathbb{C}}
\newcommand{\bbP}{\mathbb{P}}
\newcommand{\bfn}{\mathbf{n}}
\newcommand{\bft}{\mathbf{t}}
\newcommand{\bfu}{\mathbf{u}}
\newcommand{\bfv}{\mathbf{v}}
\newcommand{\bfw}{\mathbf{w}}
\newcommand{\calB}{\mathcal{B}}
\newcommand{\bfR}{\mathbf{R}}
\newcommand{\ur}{\underline{\mathbf{R}}}
\newcommand{\uR}{\underline{\mathbf{R}}}
\newcommand{\Id}{\mathrm{Id}}
\newcommand{\codim}{\mathrm{codim}}
\newcommand{\Mn}{M_{\langle \bfn \rangle}}
\newcommand{\fraksl}{\mathfrak{sl}}
\newcommand{\frakso}{\mathfrak{so}}
\newcommand{\fsp}{\mathfrak{sp}}
\newcommand{\fraksp}{\mathfrak{sp}}
\newcommand{\fso}{\mathfrak{so}}
\newcommand{\frakS}{\mathfrak{S}}
\newcommand{\frakgl}{\mathfrak{gl}}
\newcommand{\fgl}{\mathfrak{gl}}
\newcommand{\frakg}{\mathfrak{g}}
\newcommand{\fg}{\mathfrak{g}}
\newcommand{\frakh}{\mathfrak{h}}
\newcommand{\frakm}{\mathfrak{m}}
\def\bt{\bold t}
\renewcommand{\bar}[1]{\overline{#1}}
\renewcommand{\hat}[1]{\widehat{#1}}
\newcommand{\rk}{\mathrm{rk}}
\renewcommand{\tilde}[1]{\widetilde{#1}}
\newcommand{\textbinom}[2]{{\textstyle \binom{#1}{#2}}}
 \newtheorem*{theorem*}{Theorem}
\newtheorem{theorem}{Theorem}[section]
\newtheorem{proposition}[theorem]{Proposition}
\newtheorem{lemma}[theorem]{Lemma}
\newtheorem{corollary}[theorem]{Corollary}
\theoremstyle{definition}
\newtheorem{remark}[theorem]{Remark}
\newtheorem{definition}[theorem]{Definition}
\newtheorem{example}[theorem]{Example}
\newtheorem{problem}[theorem]{Problem}
\newcommand{\textfrac}[2]{{\textstyle\frac{#1}{#2}}}
\newcommand{\textsum}{{\textstyle\sum}}
\def\be{\begin{equation}}
\def\ene{\end{equation}}
\def\tmod{\operatorname{mod}}\def\tdim{\operatorname{dim}}
\def\d{\delta}
\def\s{\sigma}
\def\a{\alpha}\def\t{\tau}
\def\hd{,...,}
\def\La#1{\Lambda^{#1}}
\def\ol{\overline}
\def\bbta{\mathcal B}
\def\fn{\mathfrak n}
\begin{document}

\begin{abstract} We classify tensors in $\BC^m\ot \BC^m\ot \BC^m$ for $m\geq 7$ with maximal 
and next to maximal dimensional  symmetry groups under a natural genericity 
assumption, called $1$-genericity.  In other words, we classify minimal dimensional  $GL_m^{\times 3}$-orbits in 
$\BC^m\ot \BC^m\ot \BC^m$
assuming $1$-genericity. Our study uncovers new tensors with striking
geometry. This paper was motivated by Strassen's  laser method for bounding
the exponent of matrix multiplication. The best known tensor for the laser 
method
is the  large Coppersmith-Winograd tensor, and our study began with the 
observation
that it has a large symmetry group, of dimension $\binom{m+1}2$. We show that
  in odd dimensions, this is the largest possible dimension for a $1$-generic tensor, but 
in even dimensions
we exhibit a tensor with a larger dimensional symmetry group.  
In the course of the proof, we classify nondegenerate bilinear forms with 
large dimensional stabilizers, which is of interest in its own 
right.   \end{abstract}

\maketitle

\section{Introduction}\label{intro}

Linear algebra has been predominantly
concerned with two-fold tensor products: linear maps from a vector space  $A$ to a vector space $B$ (the space
$A^* \otimes B$),   bilinear forms  (the space $A^* \otimes B^*$), linear endomorphisms  (the space $A^*\ot A$),
and bilinear forms on a single space (the space $A^*\ot A^*$). In  contrast,  remarkably little   is known
about three-fold tensor products, despite their relevance to numerous
important topics  such as  signal processing, complexity theory, classification
of algebras, etc..  See, e.g., \cite{MR2865915} for a discussion.

The orbit structures in $A^*\ot A$ under the action of $GL(A)$ and in $A^*\ot B$ and $A^*\ot B^*$ under
$GL(A)\times GL(B)$ have been known since Jordan.
Already the orbit structure in $A\ot A$ under $GL(A)$ is not completely understood and there is a vast literature
on the subject, see, e.g., \cite{HornSeig:CanonicalFormsComplexMatrixCongruence,MR3958121} and the numerous references therein.
Lemma \ref{solnsizelem} below is an important addition to  the literature, characterizing the small
nondegenerate orbits. More precisely, it determines which 
nondegenerate bilinear forms on $\BC^k$ have stabilizers of dimension at least 
$\binom k2$. 

The corresponding problems for tensors, i.e., orbits in $A\ot B\ot C$ under $GL(A)\times GL(B)\times GL(C)$, is a   wild  problem in the sense of Gabriel's theorem  \cite{MR332887}  (see \cite{MR3727119} for an exposition in English), except in small dimensions and unbalanced cases such as when $\tdim C=\tdim A\tdim B$. In this paper we classify tensors $T\in A\ot B\ot C$ with large symmetry groups, or equivalently small $GL(A)\times GL(B)\times GL(C)$-orbits, in the case $\tdim A=\tdim B=\tdim C$. The smallest orbits in such a tensor space under this action  are classically known. Our study is primarily motivated by the complexity of matrix multiplication, and in this context one imposes a natural genericity  condition on the tensors of interest. This brings into play new small orbits with unexpectedly rich geometric structure.   

Besides their relevance for computer science, our results are   connected 
to a classical question in algebraic
geometry and representation theory: given a representation $V$ of a group $G$, 
what are the vectors $v\in V$ whose orbit closures are of small  dimension, 
i.e.,  with large  stabilizers?
Our main result (Theorem \ref{symthmc}) fits into a long tradition of studying 
small orbits; see for instance 
\cite{kac,MR1251060,MR3185837,MR549013,MR0470096}.

\subsection*{Motivation from matrix multiplication complexity}
The {\it exponent of matrix multiplication}  $\omega$ is a fundamental constant 
that 
controls the complexity of basic operations in linear algebra. 
It is generally conjectured that $\omega = 2$, which would imply that 
one could multiply $\bfn \times \bfn$  matrices using $O(\nnn^{2+\ep})$ 
arithmetic operations for any $\ep>0$. The current state of knowledge is     $2\leq 
\omega < 2.3728596$ \cite{arXiv:2010.05846}, but it has been 
known since 1989 that $\omega \leq 2.3755$ \cite{copwin135}.  

One  motivation for this paper is the {\it Ambainis-Filmus-Le Gall 
challenge}: find new tensors that give good upper bounds on $\omega$ via 
Strassen's laser method \cite{MR882307}. See   \cite{copwin135,BCS,MR3388238} 
for   expositions of the method. This  challenge  is motivated by the results of 
\cite{MR3388238}, where the authors showed that the main tool used so far to 
obtain upper bounds, Strassen's laser method applied to the 
Coppersmith-Winograd 
tensor using coordinate restrictions, can never prove $\omega < 2.3$. Further limitations are proved in 
\cite{DBLP:conf/innovations/AlmanW18,2018arXiv181008671A,2018arXiv181206952C}. Tensors with continuous symmetry are central to the 
implementation of the laser method. 
Advancing ideas in \cite{MR3682743}, we isolate geometric features of 
the Coppersmith-Winograd tensors and find other tensors
with similar features, in the hope they will be useful for the laser method. 
The 
point of departure of this paper was
the observation that Coppersmith-Winograd tensors have very large symmetry 
groups. This led us to the classification problem.
Our main theorem, while uncovering new geometry, fails  to produce 
new tensors good for the laser method, as none of
the tensors in Theorem \ref{symthmc} is better than the big Coppersmith-Winograd 
tensor for 
the laser method. However, in \cite{CGLV2}, guided by the results in this paper, we introduce a skew cousin 
of the small Coppersmith-Winograd
tensor $T_{cw,q}$,   analyze its  utility   for the laser method, 
and show it is  potentially better for 
the laser method
than existing tensors. In particular, $T_{skewcw,2}$, like its cousin 
$T_{cw,2}$, 
potentially could be used to prove $\omega=2$. 
See Corollary  \ref{degen2CW} and the discussion below it for an important consequence for the laser method.

\subsection*{$1$-generic tensors}

A tensor $T \in A \otimes B \otimes C$ is {\it concise} if the induced maps 
$T_A: A^* \to B\ot C$, $T_B: B^*\to A\ot C$, $T_C: C^*\to A\ot B$ are 
injective. 
In our main theorem, we  will require an  additional  natural  genericity condition  
that dates back to 
   \cite{toblerthesis} and
 has been recently studied in 
\cite{MR3578455,MR3682743,ChrVraZui:UniversalPtsAsySpecTensors}. A tensor is 
{\it 
$1_A$-generic}  if the subspace $T_A(A^*)\subset B\ot C$ contains an element of 
maximal rank; $1_A$, $1_B$ or $1_C$-generic tensors are essentially those for 
which Strassen's equations    \cite{Strassen505}  are most useful. A 
tensor is {\it binding} if  it is at least two of  $1_A$, $1_B$, or $1_C$-generic. As observed in 
\cite{MR3578455}, binding tensors are exactly the structure tensors of unital  
(not necessarily associative)  algebras. Binding tensors are 
automatically concise. A tensor is {\it $1$-generic} if it is $1_A,1_B$ and 
$1_C$ generic. ($1$-genericity is called   {\it bequem} in  \cite{toblerthesis} 
and {\it comfortable} in 
 \cite{ChrVraZui:UniversalPtsAsySpecTensors}.)
Propositions \ref{symthma} and \ref{symthmb} respectively determine  the 
maximum 
possible  dimension of the symmetry group of a $1_A$-generic tensor  and a  
binding  tensor and show in each case that  there is a unique such tensor with 
maximal dimensional symmetry. 

Theorem \ref{symthmc} classifies $1$-generic tensors with 
symmetry group of maximal and next to maximal dimension.  In particular, when 
$m$ is even, there is a striking gap in that the second
largest symmetry group has dimension $m-2$ less than the largest.

\subsection*{Notations and conventions} Let $a_1 \vvirg a_{m}$ be a basis of 
the 
vector space $A$, and $\alpha^1 \vvirg \alpha^{m}$  its dual basis of $A^*$. 
Similarly $b_1 \vvirg b_{m}$ and $c_1 \vvirg c_{m}$ are bases of $B$ and $C$ 
respectively, with corresponding dual bases $\beta^1 \vvirg \beta^{m}$ and 
$\gamma^1 \vvirg \gamma^{m}$. Informally, the symmetry group of a tensor $T \in 
A \otimes B \otimes C$ is its 
stabilizer under the natural action
of $GL(A) \times GL(B) \times GL(C)$.
For a tensor $T \in A \otimes B \otimes C$, let $G_T$ denote 
its symmetry group.
 We say $T'$ is {\it isomorphic} to $T$ if  
they are in the same $GL(A) \times GL(B) \times GL(C)$-orbit. We 
identify isomorphic tensors.
Since the action of  $GL(A) \times GL(B) \times GL(C)$ on $A\ot B\ot C$  is not 
faithful, we   work modulo the kernel of its inclusion into $GL(A\ot B\ot C)$, 
which is a $2$-dimensional 
abelian subgroup. See  Section \ref{symgpdef} for details.
The transpose of a matrix $M$ is denoted $M^{\bold t}$. For a set $X$, 
$\overline{X}$ denotes its Zariski closure. For a subset $Y\subset \BC^N$, we 
let $\langle Y\rangle\subset \BC^N$ denote its linear span.

Throughout we
use the summation convention that if a free index appears as a subscript and a superscript, then it is summed over
its range. 
 
\subsection*{Main Theorem} 
\begin{theorem} \label{symthmc} 
Let $m \geq 7$ and let $\dim A = \dim B = \dim C = m$. Let $T \in A \otimes B \otimes C$ be a $1$-generic tensor. Then
\begin{equation}\label{eqn: bound for 1 generic tensor}
 \dim G_T < \frac{m^2}{2} + \frac{m}{2}
\end{equation}
except when $T$ is isomorphic to 
\be\label{Tform} S_{\bbta}:=     a_1 \otimes b_1 \otimes c_m + a_1 \otimes b_m \otimes c_1 + a_m \otimes b_1 \otimes c_1 +  
 \textsum_{\rho =2} ^{m-1} a_1 \otimes b_\rho \otimes c_\rho + \textsum_{\rho = 2}^{m-1} a_\rho \otimes b_1 \otimes c_\rho 
+ \bbta\otimes c_1,
\ene
 where $\bbta\in A\ot B$ is one of the four following bilinear forms

\begin{align}
&\label{skewcwB} \textsum_{\xi=2}^{p+1} a_\xi \otimes b_{\xi + p} - a_{\xi + p } 
\otimes b_\xi \ \ m=2p {\rm \ even}\ \   (T_{skewCW,m-2}) \\
&
\label{CW}\textsum_{\rho=2}^{m-1} a_\rho \otimes b_\rho \ \ {\rm all } \ m  \ \  (T_{CW,m-2}) \\
&
\label{ssCW} a_{m-1} \otimes b_{m-1} + \textsum_{\xi = 2}^{p}  \bigl( a_\xi \otimes b_{\xi + p -1} - a_{\xi + p -1} \otimes b_\eta\bigr) 
 \ \ m=2p {\rm \ even} \ \   (T_{s+skewCW,m-2}) \\
&\label{s+sCW} a_{m-1} \otimes b_{m-1} + \textsum_{\xi = 2}^{p}  \bigl( a_\xi \otimes b_{\xi + p -1} - a_{\xi + p -1} \otimes b_\eta\bigr) 
 \ \ m=2p+1 {\rm \ odd}\ \   (T_{s\oplus skewCW,m-2} )
\end{align}

All these except $T_{skewCW,m-2}$ have $\tdim G_{S_\bbta}=\frac{m^2}{2}+\frac{m}{2}$, and
$\tdim G_{T_{skewCW,m-2}}=\frac{m^2}{2}+\frac {3m}{2}-2$.
\end{theorem}

Theorem \ref{symthmc} implies: 
When $m$ is even, there is a unique, up to isomorphism, $1$-generic tensor $T$ with maximal dimensional
symmetry group, namely $T_{skewCW,m-2}$, and there are 
exactly two, up to isomorphism, additional $1$-generic tensors $T$ such that $\dim G_T \geq \frac{m^2}{2}+\frac{m}{2}$, which are $T_{CW,m-2}$ and $T_{s+skewCW,m-2}$, where equality holds.
When $m$ is odd, there are exactly two  $1$-generic  tensors   $T$ up to isomorphism 
with maximal dimensional symmetry group $\frac{m^2}{2}+\frac{m}{2}$,  which are $T_{CW,m-2}$ and $T_{s\oplus skewCW,m-2}$.

Call a $1$-generic tensor {\it skeletal} if it is of the form \eqref{Tform} for some nondegenerate
bilinear form $\bbta$ (not necessarily one appearing in the theorem).

The Lie algebras of skeletal tensors are given explicitly in Proposition \ref{prop: SB sym}.

The statement of the theorem hints at the method of proof. First, we observe that any $1$-generic tensor may be degenerated to a skeletal tensor with some bilinear form $\bbta$. We then classify bilinear forms with large stabilizer, and show that the corresponding skeletal tensor has symmetry group of dimension lower than $\binom{m+1}{2}$, except for the four cases mentioned in the theorem. Because of the dimensions, we get for free that any tensor that degenerates
to one of them other than $T_{skewCW,m-2}$, i.e.,  \eqref{skewcwB},  satisfies \eqref{eqn: bound for 1 generic tensor}. We then carry out an extensive calculation
to show that any tensor that degenerates to $T_{skewCW,m-2}$ also satisfies 
\eqref{eqn: bound for 1 generic tensor}.   

  \subsection*{Nondegenerate bilinear forms} 
An important step in the proof of Theorem \ref{symthmc} requires us to classify nondegenerate bilinear forms with large symmetry groups. This is done in Lemma \ref{solnsizelem}, where we classify bilinear forms $\calB \in \bbC^k \otimes  \bbC^k$ with large stabilizer under the  action of $GL_k$  given by $g \cdot \bbta = g \bbta g^{\bft}$ for $g \in GL_k$. 

Let $H_{\bbta}\subset GL_k$ be this stabilizer and let $\frakh_{\bbta}$ be its Lie algebra. Elements $X \in \frakh_{\bbta}$ are characterized by the condition 
\begin{equation}\label{eqn: Lie algebra of B}
X \calB + \calB X^{\bold t} = 0.
\end{equation}
The dimension of the solution space of \eqref{eqn: Lie algebra of B} is computed in \cite{TerDop:SolutionsXAplusAXt}, via the Kronecker normal forms of pencil of matrices discussed in \cite{Thomp:PencilsComplexSymSkew,HornSeig:CanonicalFormsComplexMatrixCongruence}. 

In the particular case where the symmetric part or the skew-symmetric part of the bilinear form $\calB$ is non-degenerate, the problem of determining the dimension of $\frakh_\calB$ is equivalent to the one of determining the dimension of certain orbits in the adjoint representations of $\fso_k$ and $\fsp_k$. This is addressed extensively in the literature; see, e.g., \cite{MR0150210,MR432778, MR2734449,MR1832903}.

However, the characterization of \cite{TerDop:SolutionsXAplusAXt}, and the results of the related references, are not suitable for explicitly determining the maximal and next to maximal dimension of $\frakh_\calB$. We characterize these cases explicitly in Lemma \ref{solnsizelem}.

Before stating Lemma \ref{solnsizelem}, we set some notation. Write $W = {\bbC^k}$. As a $\fgl(W)$-module, $W^* \otimes W^* = S^2 W^* \oplus  \Lambda^2 W^*$. Write $\bbta = Q + \Lambda$ with $Q \in S^2 W^*$ symmetric and $\Lambda \in \Lambda^2 W^*$ skew-symmetric. Write $E=\ker(\Lambda)$, $F=\ker (Q)$, which are subspaces of $W$; let $L^*= E^\perp 
\cap F^\perp \subseteq W^*$. Choose a complement $L \subset W$ of $E \oplus F$ so that we have a direct sum decomposition
\[
W= E \oplus L \oplus F,
\]
and we may identify $L$ with the dual space of $L^*$. We may also identify $E^*$ with $(L\op F)^\perp$ and $F^*$ with $(E\op L)^\perp$, regarded as subspaces of $W^*$.

Let $e=\dim E$, $f=\dim F$, and $\ell=\dim L$. Notice that $\rk(\Lambda) = \ell + f$ is even. 

For a subspace $U\subset W$, write $\bbta|_U:= \bbta|_{U\times U}$; this is naturally an element of $(W^* / U^\perp) \otimes (W^* / U^\perp)= U^* \otimes U^*$.  

\begin{lemma}\label{solnsizelem} 
With notations as above, let $\calB \in \bbC^{k*}  \otimes \bbC^{k*}$ be a full rank bilinear form. Then 
\begin{equation}\label{eqn: bound bilinear form}
\dim \frakh_\calB < \binom{k}{2}
\end{equation}
except in the following cases
\begin{enumerate}[(i)]
 \item $(e,\ell,f) = (0,0,k)$ (so $k$ is even): in this case $\calB = 
\Lambda$ is skew-symmetric  and $\frakh_{\bbta} = \fraksp(\Lambda)$ with $\dim 
\frakh_{\bbta} = \binom{k+1}{2}$;
 \item $(e,\ell,f)=(k,0,0)$: in this case $\bbta = Q$ is symmetric 
and $\frakh_{\bbta} = \frakso(Q)$ with $\dim \frakh_{\calB}=\binom{k}{2}$;
 \item  $(e,\ell,f)=(0,1,k-1)$ (so $k$ is even): in this case $\dim 
\frakh_{\bbta}=\binom{k}{2}$;
 \item  $(e,\ell,f)=(1,0,k-1)$ (so $k$ is odd): $\frakh_{\calB} = 
\fraksp( \Lambda \vert_F)$ with $\dim \frakh_{\bbta} = \binom{k}{2}$.
\end{enumerate}
\end{lemma}

\subsection{Overview} In \S\ref{symgpdef} we review basic facts about symmetry groups and algebras
of tensors and their degenerations.
In \S\ref{symproof} we determine the $1_A$-generic  and binding tensors with maximal dimensional
symmetry groups. In \S\ref{biformsub} we prove Lemma \ref{solnsizelem}  on annihilators of bilinear forms.
In \S\ref{bigpfsecta} we begin the proof of Theorem \ref{symthmc}, which we finish in \S\ref{longcalc}.
In \S\ref{brbnds} we show that among skeletal tensors, the only one  with minimal border rank
is the Coppersmith-Wingrad tensor. We observe that this implies a result of \cite{hoyois2021hermitian}
that any $1$-degenerate  minimal border rank tensor degenerates to the Coppersmith-Winograd tensor.
We briefly discuss symmetry algebras of other tensors relevant for the laser method in \S\ref{examplesect}.

\section{The symmetry group of a tensor}\label{symgpdef}

In this section, we define the symmetry group of a tensor and its Lie algebra. 
 
Let $\tilde{\Phi}: GL(A) \times GL(B) \times GL(C)\to GL(A \otimes B \otimes C)$ denote the natural action of $GL(A) \times GL(B) \times GL(C)$ on $A \otimes B \otimes C$. The map $\tilde {\Phi}$ has a two dimensional kernel 
\[
\ker \tilde {\Phi} = \{ (\lambda \Id_A, \mu\Id_B, \nu \Id_C): \lambda \mu \nu = 1\} \simeq 
(\bbC^*)^{\times 2}.
\]
Thus
\begin{equation}\label{Gdef}
G:= \left( GL(A) \times GL(B) \times GL(C) \right) / (\bbC^*)^{\times 2}
 \end{equation}
is naturally a subgroup of $GL(A \otimes B \otimes C)$. 

\begin{definition}
Let $T \in A \otimes B \otimes C$. The symmetry group of $T$, denoted $G_T$, is the stabilizer of $T$ in $G$:
\begin{equation}\label{gtdef}
G_T:=\{ g\in  \left(GL(A)\times GL(B)\times GL(C)\right)/(\BC^*)^{\times 2}  \mid \ g\cdot T=T\}.
\end{equation}
\end{definition}

The symmetry group $G_T$ is a closed algebraic subgroup of $G \subseteq GL(A\otimes B \otimes C)$. If $T $ and $T'$ are isomorphic tensors, then $G_T$ and $G_{T'}$ are conjugate subgroups of $G$; in particular $\dim G_T = \dim G_{T'}$. 

Moreover,   $\dim G_T=\tdim \fg_T$, where $\fg_T$
is the corresponding Lie subalgebra of the algebra $\frakg = \left(\frakgl(A) \oplus \frakgl(B) \oplus \frakgl(C)\right) / \bbC^2$ 
annihilating  $T$:
\[
\frakg_T := \{ L \in \left( \frakgl(A) \oplus \frakgl(B) \oplus \frakgl(C)\right) / \bbC^2 \mid L.T = 0\}.
\]
Here $L.T$ denotes the Lie algebra action.

The algebra $\left(\frakgl(A) \oplus \frakgl(B) \oplus \frakgl(C)\right) / \bbC^2$ is the image of the differential $\Phi = d\tilde{\Phi}$ of the map $\tilde{\Phi}$ defined above, see, e.g., \cite[Section 1.2]{MR2265844}. 
 
It is more convenient to describe the annihilator $\tilde{\frakg}_T = \Phi^{-1}(\frakg_T)$ as a subalgebra of $\frakgl(A) \oplus \frakgl(B) \oplus \frakgl(C)$, acting on $A \otimes B \otimes C$ via the Leibniz rule. Notice that $\tilde{\frakg}_T$ always contains $\ker \Phi = \{ \lambda \Id_A , \mu \Id_A , \nu \Id_A ) : \lambda + \mu + \nu = 0\} \simeq \bbC^2$, and so $\dim G_T = \dim \frakg_T = \dim \tilde{\frakg}_T -2$.

Explicitly, if $L=(U,V,W)\in \fgl(A) \oplus \fgl(B) \oplus \fgl(C)$,
and we write  $U=( u^i_j), V=(v^i_j),W=( w^i_j)$, the condition $L.T = 0$ is equivalent to 
the following    system of linear  equations (we remind the reader of the summation convention):
\begin{equation}\label{killeqn}
u^i_{i'} T^{i' jk}+ v^j_{j'} T^{i j' k} + w^k_{k'} T^{ij k'}=0, \text{ for 
every 
} i,j,k. 
\end{equation}

In what follows, we regard $u^i_{i'} ,v^j_{j'}, w^k_{k'}$ as linear coordinates on $\fgl(A)\op \fgl(B)\op \fgl(C)$, i.e., as basis vectors of the dual space  $\fgl(A)^*\op \fgl(B)^*\op \fgl(C)^*$. There is an inclusion $\tilde \fg_T\subset \fgl(A)\op \fgl(B)\op \fgl(C)$ and the conditions in \eqref{killeqn} are the relations placed on these linear functions when they are pulled back to $\tilde \fg_T$.

In the rest of  the paper, we often display special instances of \eqref{killeqn}, marking them with the corresponding triplet of indices $(ijk)$. 
 
The codimension of $\tilde\frakg_T$ in $\fgl(A) \oplus \fgl(B) \oplus \fgl(C)$ equals the number of linearly independent equations in the system \eqref{killeqn}. In order to prove an upper bound on $\dim \frakg_T$, we will prove lower bounds on the rank of the linear system \eqref{killeqn}. This will be achieved via subsequent normalizations of the tensor $T$, obtained exploiting the genericity hypotheses.

We recall two basic facts, which will be useful throughout the paper. We refer to \cite{FH,MR2265844} for the general theory.

\begin{lemma}\label{lemma: highest weight vector}\label{semiconrem}
 Let $G$ be a reductive algebraic group  and let $W$ be a    $G$-module.
 Let $w,w'\in W$ with $w\in \ol{G\cdot w'}$. Then
 \[
  \dim G_w \geq \dim G_{w'}.
 \]
\end{lemma}

Recall that every nonzero vector contains a highest weight vector in its orbit closure, giving a general upper bound
on the dimensions of stabilizers.

Given $T,T' \in A \otimes B \otimes C$, we say that $T$ \emph{degenerates} to $T'$ if 
$$T'\in \overline{GL(A)\times GL(B)\times GL(C) \cdot T}.
$$
Thus a special case of  Lemma \ref{lemma: highest weight vector} is that if  $T$ degenerates to $T'$, then $\dim G_{T'} \geq \dim G_T$ and the inequality is strict unless $T$ and $T'$ are isomorphic tensors.

\begin{lemma}\label{lemma: annihilator of direct sum}
Let $G$ be a reductive algebraic group and let $W$ be a $G$-module. Let $W = W_1 \ooplus W_k$ be the decomposition of $W$ into its isotypic components. For $w \in W$, write $w = w_1 + \cdots + w_k$ for its isotypic decomposition. Then 
\[
\frakg_w = \frakg_{w_1} \cap \cdots \cap \frakg_{w_k}.
\]
\end{lemma}

The following is an immediate consequence of Theorem \ref{symthmc} and Lemma \ref{semiconrem}.

\begin{corollary}
Let $m \geq 7$ and let $\{1-{\rm generic}\} \subseteq \bbC^m \otimes \bbC^m \otimes \bbC^m$ be the open set of $1$-generic tensors. 

If $m$ is even then 
\[
\overline{GL_m^{\times 3}\cdot T_{skewCW,m-2}}\cap \{1-{\rm generic}\} = GL_m^{\times 3}\cdot T_{skewCW,m-2}.
 \]

If $m$ is odd, then 
\begin{align*}
&\overline{GL_m^{\times 3}\cdot T_{s\oplus skewCW,m-2}}\cap \{1-{\rm generic}\} =GL_m^{\times 3}\cdot T_{s\oplus skewCW,m-2}, \\ 
&\overline{GL_m^{\times 3}\cdot T_{CW,m-2}} \cap \{1-{\rm generic}\}  = GL_m^{\times 3}\cdot T_{CW,m-2}. 
\end{align*}
\end{corollary}

\section{Symmetry groups of tensors: first results}\label{symproof}

In this section, we review the classical result on the largest possible symmetry group of any tensor, and we characterize the maximal possible symmetry group for a $1_A$-generic tensor and for a binding tensor.

\subsection{Arbitrary tensors}

The unique tensor with largest symmetry group in $A \otimes B \otimes C$ is (up to change of bases) $a_1\ot b_1\ot c_1$. This follows immediately from Lemma \ref{lemma: highest weight vector}, since any element of the form $a \otimes b \otimes c$ is highest weight vector in $A \otimes B \otimes C$ for the action of $G=GL(A) \times GL(B)\times GL(C)$, under some choice of   Borel subgroup.

The annihilator of $a_1 \otimes b_1 \otimes c_1$, presented in $(1,m-1)\times (1,m-1)$ block form, is
 \[
\tilde \frakg_{a_1\ot b_1\ot c_1}= \left\{
\begin{pmatrix} u^1_1 & \bfu \\ 0 & \bar{U}\end{pmatrix},
\begin{pmatrix} v^1_1 & \bfv \\ 0 & \bar{V}\end{pmatrix},
\begin{pmatrix} w^1_1 & \bfw \\ 0 & \bar{W}\end{pmatrix}\ \mid
\ \begin{matrix} u^1_1+v^1_1+w^1_1=0, \\ \bfu,\bfv,\bfw\in \BC^{m-1},  
\bar{U},\bar{V},\bar{W}\in \frakgl_{m-1}
\end{matrix}
\right\}.
\]
Hence, $\dim G_{T}= [3(m-1)^2+3(m-1)+2]-2 = 3m^2-3m$. Indeed, the orbit of $a_1 \otimes b_1 \otimes c_1$ under the action of $G$ is the Segre variety of rank one tensors, which has dimension $3m -2 = \dim G - \dim \fg_T$.

\subsection{$1_A$-generic tensors}\label{1Agen}

\begin{proposition}\label{symthma}
Let $T\in A \otimes B \otimes C$ be $1_A$-generic. Then $\dim G_T \leq 2m^2-m-1$ and equality occurs uniquely for the tensor $T_0 =   a_1 \otimes 
(\sum_{j=1}^{m } b_j\ot c_j)$.  
\end{proposition}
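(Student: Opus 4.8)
The plan is to use the freedom to normalize a $1_A$-generic tensor so that the map $T_A$ carries a chosen maximal-rank element to the identity, and then to read off the symmetry Lie algebra directly from the system \eqref{killeqn}. First I would fix $\alpha \in A^*$ with $T_A(\alpha)$ of maximal rank $m$; after acting by $GL(B)\times GL(C)$ I may assume $T_A(\alpha) = \sum_j b_j\ot c_j = \Id$, and after acting by $GL(A)$ I may assume $\alpha = \alpha^1$, so that $T = a_1\ot(\sum_j b_j\ot c_j) + \sum_{\rho\geq 2} a_\rho\ot M_\rho$, where $M_\rho \in B\ot C$ are the remaining slices. The tensor $T_0$ of the statement is the case $M_\rho = 0$ for all $\rho$; the goal is to show its symmetry algebra has the claimed dimension and that any nonzero $M_\rho$ strictly decreases it.

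For the dimension count of $\fg_{T_0}$, I would write $T_0^{ijk} = \delta^{i}_1\delta^{jk}$ (using $\delta^{jk}$ for the identity form) and substitute into \eqref{killeqn}: the equation with indices $(i,j,k)$ becomes $u^i_1\delta^{jk} + \delta^{i}_1(v^j_k + w^k_j) = 0$. Splitting on whether $i=1$: for $i\neq 1$ this forces $u^\rho_1 = 0$ (taking $j=k$) and gives no further constraint; for $i=1$ it forces $v^j_k + w^k_j = 0$ for all $j,k$, i.e.\ $W = -V^{\bold t}$. So the independent equations are the $m-1$ equations $u^\rho_1=0$ together with the $m^2$ equations $V = -W^{\bold t}$; hence $\tilde\fg_{T_0}$ has codimension $(m-1)+m^2$ in $\fgl(A)\op\fgl(B)\op\fgl(C)$, giving $\dim\tilde\fg_{T_0} = 3m^2 - (m^2+m-1) = 2m^2 - m + 1$ and $\dim G_{T_0} = 2m^2-m-1$. (Here $U$ is free except for its first column being zero, $V$ is free, and $W$ is determined by $V$; one checks these triples indeed annihilate $T_0$, so the count is exact.)

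For the upper bound and uniqueness, I would argue that the presence of any nonzero slice $M_\rho$ can only impose \emph{additional} independent equations in \eqref{killeqn} beyond those forced above, or equivalently invoke semicontinuity: $T_0$ is a degeneration of every $1_A$-generic $T$ (scale the $a_\rho$, $\rho\geq2$, to zero), and if some $M_\rho\neq 0$ then $T\notin G\cdot T_0$ since $T$ is concise (has an $(m-1)$-dimensional space of slices meeting $\Id$ only trivially... more carefully, the $A$-rank of $T_0$ is $1$ while a non-concise-in-$A$... ) — the clean statement is that $T_0$ has $A$-slice span of dimension $1$, so it is not concise for $m\geq 2$, whereas one must instead compare orbits directly. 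The cleanest route: if $M_\rho\neq 0$ for some $\rho$, then $T$ is not in the orbit of $T_0$ (e.g.\ because $\dim T_A(A^*)\geq 2 > 1 = \dim (T_0)_A(A^*)$), and since $T_0\in\overline{G\cdot T}$, Remark \ref{semiconrem} gives $\dim G_T < \dim G_{T_0} = 2m^2-m-1$. This also yields uniqueness, since any $1_A$-generic $T$ with $\dim G_T = 2m^2-m-1$ must satisfy $T_0\in\overline{G\cdot T}$ with equality of dimensions, forcing $T\in G\cdot T_0$.

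The main obstacle I anticipate is the normalization step combined with making the degeneration argument airtight: one must be sure that normalizing $T_A(\alpha)$ to the identity is legitimate (it is, by $1_A$-genericity) and that the ``$M_\rho\neq 0 \Rightarrow T\notin G\cdot T_0$'' claim is correctly justified — the honest invariant distinguishing the orbits is $\dim T_A(A^*)$, which is $1$ for $T_0$ and at least $2$ once some extra slice is nonzero and independent of $\Id$; a nonzero $M_\rho$ that happened to be a multiple of $\Id$ would be absorbed into a change of basis in $A$, so after a further normalization we may assume the $M_\rho$ together with $\Id$ are linearly independent, and then the orbit invariant genuinely jumps. Everything else is the routine linear-algebra bookkeeping of \eqref{killeqn} sketched above.
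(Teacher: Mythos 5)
Your proposal is correct, and it follows the paper's proof in its essentials: the same normalization $T^{1jk}=\delta_{jk}$, the same degeneration $a_\rho\mapsto 0$ combined with Remark \ref{semiconrem}, and the same invariant $\dim T_A(A^*)$ to separate the orbit of $T_0$ from everything else. The one structural difference is where the upper bound for a general $1_A$-generic $T$ comes from: the paper first shows directly from \eqref{killeqn} (its equations \eqref{oneAeqn} and \eqref{oneAeqnb}) that for \emph{any} normalized $1_A$-generic $T$ the entries $W$ and $u^\rho_1$ are determined by $u^1_i$, $\bar U$ and $V$, giving $\dim \fg_T\leq 2m^2-m-1$ before any orbit comparison, and only then uses the degeneration for uniqueness; you instead deduce the bound for all $T$ from semicontinuity together with the computation of $\dim G_{T_0}$, which is logically sufficient and slightly leaner, at the cost of the extra information the direct rank count provides (namely, seeing at the level of the linear system that equality forces $T^{\rho jk}=0$). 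One small slip worth fixing: for $T_0$ the $i=1$ equations read $u^1_1\delta_{jk}+v^j_k+w^k_j=0$, so the solution has $W=-V^{\bold t}-u^1_1\Id$ rather than $W=-V^{\bold t}$; this does not change the rank of the system (still $m^2$ independent equations determining $W$, plus the $m-1$ equations $u^\rho_1=0$), so your count $\dim G_{T_0}=2m^2-m-1$ stands, but as literally written your parametrized triples with $u^1_1\neq 0$ would not annihilate $T_0$.
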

\begin{proof}
Let $T\in A\ot B\ot C$  be $1_A$-generic, so there exists $\alpha \in A^*$ such 
that $T_A(\alpha) \in B \otimes C$ has rank $m$. 

After a change of basis in $B$, we may assume that $T_A(\alpha) = 
\sum_{i=1}^m  b_i \otimes c_i$, and after a change of basis in $A$, we may further assume that $\alpha = \alpha^1$. In other words, after a suitable choice of bases in $B$ and in $A$, $T^{1jk} = \delta_{jk}$. 

We first observe that any such tensor degenerates to $T_0$ by the degeneration 
$a_\rho \mapsto 0$ for $\rho = 2 \vvirg m $. Thus, by Lemma \ref{semiconrem} it suffices to
compute $\fg_{T_0}$.

>From \eqref{killeqn}, we have
\begin{equation}\label{oneAeqn}
(1jk) \qquad  u^1_1\delta _{jk}  + v^j_k +w^k_j=0.
\end{equation}
  
Set $2\leq \rho,\sigma ,\tau \leq m$ and use the summation convention. Setting $j=k$, we have 
\begin{equation}\label{oneAeqnb}
(\rho jj) \qquad  u^\rho_1 =0.
\end{equation}

Now \eqref{oneAeqn} shows that the endomorphism $W \in \frakgl(C)$ is completely determined by $u^1_i$ and  $V\in \fgl(B)$. 
In summary, $u^1_i$, $u^\rho_\sigma$ and $V$ completely determine $L \in \tilde{\frakg}_{T_0}$.  Thus   
\[
\tilde \frakg_{T_0}=\left\{
  \begin{pmatrix} -(\mu+\nu) &\bfu \\ 0 & \bar{U} \end{pmatrix}, (\mu\Id+V),
(\nu\Id-V^{\bold t} )
  \mid  \mu,\nu\in \BC, \bar{U} \in \fgl_{m-1},V\in \fgl_m, \bfu \in \BC^{m-1}
\right\}.
\]
\end{proof}

\subsection{Binding tensors}\label{bindsect}


\begin{proposition} \label{symthmb}   Let $T\in A\ot B\ot C=\BC^m\ot \BC^m\ot 
\BC^m$ be a 
binding tensor. Then  $\dim G_T \leq m^2-1$, and equality occurs uniquely
(up to permutation of the three factors)  for the 
tensor 
\[
T_{\utriv,m}:=a_1\ot b_1\ot c_1 + \sum_{\rho=2}^m  a_1\otimes b_\rho \otimes 
c_\rho + \sum_{\rho = 2}^m a_\rho\ot b_1\ot c_\rho .
\]
\end{proposition}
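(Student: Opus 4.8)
The plan is to mimic and extend the argument used for Proposition \ref{symthma}, exploiting that a binding tensor is $1_A$-generic \emph{and} $1_B$-generic, so we get two independent normalizations to impose. First I would use $1_A$-genericity to normalize so that $T^{1jk}=\delta_{jk}$, exactly as in the proof of Proposition \ref{symthma}; this already gives, from \eqref{killeqn} with $i=1$, the relation \eqref{oneAeqn}, which expresses $W$ in terms of $u^1_i$ and $V$, and from $i=\rho$, $j=k$, the relation \eqref{oneAeqnb} expressing $u^\rho_1$ in terms of $u^\rho_\sigma$, $u^1_i$ and $V$. Then I would invoke $1_B$-genericity: after a further change of basis in $A$ and in $C$ (chosen to preserve the normalization $T^{1jk}=\delta_{jk}$ — one must check this freedom survives, using that the stabilizer of $\sum b_i\otimes c_i$ acting on the pair $(A,C)$ is still large), I can assume $T^{i1k}=\delta_{ik}$ as well. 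Running \eqref{killeqn} with $j=1$ then yields the symmetric counterpart: $W$ (or rather the remaining freedom in $V$) becomes further constrained, and in particular $v^\rho_1$ is determined by $v^\rho_\sigma$, $v^1_j$ and $U$.

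The counting then goes as follows. From the $i=1$ equations, $W$ is completely determined by $(u^1_i,V)$, so $W$ contributes nothing new; from the $j=1$ equations we additionally learn that the off-diagonal-block part of $V$, namely $v^\rho_1$, is determined, and — crucially — comparing the two families of equations on the overlap should pin down $V$ itself up to its $\frakgl_{m-1}$ block $\bar V$ plus a scalar, or something of that order. Concretely I expect to end with: $L$ is determined by $u^1_i\in\BC^{m}$ wait — more carefully, by the data $u^1_1$, $u^\rho_\sigma\in\frakgl_{m-1}$, $v^1_1$, $v^\rho_\sigma\in\frakgl_{m-1}$, together with the shared scalars, giving $\dim\tilde\frakg_T\le (1+(m-1)^2)+(1+(m-1)^2)+\cdots$, and after subtracting the $2$-dimensional kernel I want to reach exactly $m^2-1$. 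The honest way to get the clean bound is the algebra viewpoint: a binding tensor is the structure tensor of a unital (nonassociative) algebra $\mathcal C$ of dimension $m$, and $\frakg_T$ then contains the derivation algebra $\mathrm{Der}(\mathcal C)$ together with the multiplication-type operators; the largest possible symmetry occurs when the algebra is as degenerate as possible subject to being unital, which is the algebra $\mathcal C=\BC1\oplus V_0$ with $V_0\cdot V_0=0$ (the "trivial unital extension"), whose structure tensor is precisely $T_{utriv,m}$. For that algebra one computes $\tilde\frakg_{T_{utriv,m}}$ directly in block form and sees its dimension is $m^2+1$, hence $\dim G_{T_{utriv,m}}=m^2-1$.

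For uniqueness I would again use the semicontinuity/degeneration machinery of Remark \ref{semiconrem}: any binding tensor normalized as above degenerates to $T_{utriv,m}$ by the one-parameter family sending $a_\rho\mapsto t\,a_\rho$ (or by killing the "$\rho\sigma\tau$-part" of $T$), and if $T$ is not already isomorphic to $T_{utriv,m}$ then this degeneration is proper, so $\dim G_T<\dim G_{T_{utriv,m}}=m^2-1$. Equivalently, $T$ is isomorphic to $T_{utriv,m}$ iff the associated unital algebra has $V_0\cdot V_0=0$, iff $T^{\rho\sigma\tau}=0$ for all $\rho,\sigma,\tau\ge 2$ after normalization, and otherwise $T_{utriv,m}\notin G\cdot T$.

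The main obstacle I anticipate is the bookkeeping in the middle step: showing that the two normalizations ($1_A$ and $1_B$) can be imposed simultaneously while keeping enough residual freedom, and then extracting from the \emph{combined} linear system \eqref{killeqn} exactly the right count — in particular showing that nothing beyond the two $\frakgl_{m-1}$ blocks and the two "diagonal" scalars is free, with no overcounting. The cleanest route around this is probably to bypass the raw equation-counting and argue structurally via the unital algebra: bound $\dim\frakg_T$ by $\dim$ of the space of pairs (derivation-like map, right-multiplication-like map) and observe this is maximized at the null extension, then verify the value $m^2-1$ by the explicit block computation for $T_{utriv,m}$.
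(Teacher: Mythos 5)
Your overall strategy is the paper's: normalize $T^{1jk}=\delta_{jk}$ and then $T^{i1k}=\delta_{ik}$ (the paper verifies the residual freedom explicitly, via the subgroup \eqref{redfreea}, which you only flag as ``to be checked''), compute $\tilde\frakg_{T_{utriv,m}}$ in block form to get $\dim G_{T_{utriv,m}}=m^2-1$, and use degeneration plus Remark \ref{semiconrem} for uniqueness. But the one concrete degeneration you write down is wrong. Under $a_\rho\mapsto t\,a_\rho$ (with $B$ and $C$ untouched) the terms $a_\rho\otimes b_1\otimes c_\rho$ of the normalized tensor are also scaled by $t$, so the limit as $t\to 0$ is $a_1\otimes(\sum_j b_j\otimes c_j)=T_0$, not $T_{utriv,m}$; semicontinuity applied to that family only reproduces the bound of Proposition \ref{symthma}, which is useless here. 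Your fallback, ``killing the $\rho\sigma\tau$-part,'' is a projection, not an argument that $T_{utriv,m}\in\overline{G\cdot T}$, and it also overlooks that the residual part $T'$ lives in $\langle a_2,\dots,a_m\rangle\otimes\langle b_2,\dots,b_m\rangle\otimes C$, i.e., it has $c_1$-components as well; correspondingly, your criterion ``$T\cong T_{utriv,m}$ iff $T^{\rho\sigma\tau}=0$ for $\rho,\sigma,\tau\ge 2$'' omits the coefficients $T^{\rho\sigma 1}$. The correct degeneration must rescale all three factors so that the three unital blocks are fixed while $T'$ dies: e.g.\ $a_1\mapsto \ep^{-1}a_1$, $a_\rho\mapsto\ep a_\rho$, $b_1\mapsto\ep^{-1}b_1$, $b_\sigma\mapsto\ep b_\sigma$, $c_1\mapsto\ep^2 c_1$, $c_\tau\mapsto c_\tau$, which fixes $a_1\otimes b_1\otimes c_1$, $a_1\otimes b_\rho\otimes c_\rho$, $a_\rho\otimes b_1\otimes c_\rho$ and multiplies every term of $T'$ by $\ep^2$ or $\ep^4$.

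The second gap is that your upper bound $\dim G_T\le m^2-1$ is never actually proved: the equation count is abandoned midstream, and the ``algebra viewpoint'' reduces to the unproven assertion that the symmetry is maximized at the null extension $\BC 1\oplus V_0$, $V_0\cdot V_0=0$ --- which is essentially the statement to be shown. Note, however, that once the degeneration is fixed, no separate bound argument is needed at all: every binding tensor can be brought to the normalized form and then degenerated to $T_{utriv,m}$, so Remark \ref{semiconrem} gives $\dim G_T\le\dim G_{T_{utriv,m}}=m^2-1$ for all binding $T$, with strict inequality whenever $T_{utriv,m}\notin G\cdot T$; this is exactly how the paper obtains bound and uniqueness in one stroke. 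Your explicit value $\dim\tilde\frakg_{T_{utriv,m}}=m^2+1$, hence $\dim G_{T_{utriv,m}}=m^2-1$, agrees with the paper's computation, so repairing the degeneration (and carrying out the check that the second normalization is compatible with the first) is what stands between your sketch and a complete proof.
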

\begin{proof} 
Assume $T$ is $1_A$-generic and $1_B$-generic. As in the proof of Proposition \ref{symthma}, we may assume $T(\alpha^1) \in B \otimes C$ has full rank and normalize it to $\sum_i b_i \otimes c_i$. Note that the action $GL_m \to GL(B) \times GL(C)$ defined by $g \mapsto g \otimes (g^\bft)^{-1}$  preserves this normalization. 

 By assumption, there is $\beta \in B^*$ such that $T(\beta) \in A \otimes C$ has full rank. Using the simultaneous action of $GL_m$ on $B$ and $C$, we may assume $T(\beta^1) \in A \otimes C$ has full rank and normalize it to $T(\beta^1) = \sum_{i} a_i \otimes c_i$, that is $T^{i1k} = \delta_{ik}$.

After this normalization $T = T_{\utriv, m} + T'$ where $T'\in \langle a_2\hd a_m\rangle\ot\langle b_2\hd b_m\rangle\ot C $. Apply the degeneration 
defined by $(X_\ep,Y_\ep,Z_\ep)$ with 
\[
\begin{array}{rllrllrl}
X_\ep : & a_1 \mapsto \textfrac{1}{\ep} a_1 & ~ & Y_\ep: & b_1 \mapsto 
\textfrac{1}{\ep} 
b_1 & ~ & Z_\ep: & c_1 \mapsto \ep^2 c_1 \\
& a_\rho \mapsto \ep a_\rho & ~ & ~& b_\sigma \mapsto \ep b_\sigma & ~ & & 
c_\tau 
\mapsto c_\tau ,
\end{array}
\]
where $\rho,\sigma,\tau \geq 2$.

Among the bases elements appearing in $T$, $a_i \otimes b_j \otimes c_k$ is fixed if and only if $(i,j,k) = (1,1,1)$ or $(i,j,k) = (1,\rho,\rho)$ or 
$(i,j,k) = (\rho,1,\rho)$. All the others basis elements have coefficient $\ep$, $\ep^2$, or $\ep^4$. This shows that $\lim_{\ep \to 0} (X_\ep,Y_\ep,Z_\ep) \cdot T = 
T_{\utriv,m}$, therefore $T$ degenerates to $T_{\utriv,m}$. We conclude that either $T$ and $T_{\utriv,m}$ are isomorphic tensors, or $\dim G_{T} < \dim G_{T_{\utriv.m}}$ by 
Lemma \ref{semiconrem}.

An explicit calculation gives
\begin{align*}
  &\tilde\frakg_{T_{\utriv,m}} = \\
 &\left\{  \left( 
\left(\begin{smallmatrix} \lambda   & \bfu \\ 0 &  -(\mu+\nu)\Id-   
\bar{W}^{\bold t} \end{smallmatrix}\right),
\left(\begin{smallmatrix} \mu  & \bfv \\ 0 & -(\lambda + \nu)\Id - 
\bar{W}^{\bold t}\end{smallmatrix}\right),
\left(\begin{smallmatrix} -\lambda-\mu  & 0 \\ -\bfu^{\bold t} - \bfv^{\bold t} 
&   \nu\Id  + \bar{W} \end{smallmatrix}\right) \right)
 \mid   \begin{array}{l} \lambda,\mu,\nu\in\BC,\\
  \bfu,\bfv \in \BC^{m-1},  \bar{W} \in \fraksl_{m-1} \end{array}  \right\},
\end{align*}
which has dimension $[(m-1)^2-1] + 2 (m-1) + 3$. Hence $\dim 
\frakg_{T_{\utriv,m}} = 
\dim G_{T_{\utriv,m}}  = m^2 -1$. This concludes the proof.
\end{proof}

The normalization performed in the proof of Proposition \ref{symthmb} shows that via the identifications induced by $T(\alpha^1)$ and $T(\beta^1)$, every binding tensor $T$ defines a bilinear map $T: C \times C \ra C$ such that $T(\a^1,\cdot): C\ra C$    
and $T(\cdot,\beta^1): C\ra C$ are both the identity map. The bilinear map $T$ is the structure tensor of a (not necessarily associative) unitary algebra structure on $C$, and $\alpha^1 \simeq \beta^1$ defines the identity element. We refer to \cite{MR3578455}   for a  discussion of this perspective.

\begin{remark} The tensor $T_{\utriv,m}$ is the structure tensor of the trivial unitary algebra of dimension $m$. Explicitly, this algebra may be identified with the quotient $\bbC[x_1 \vvirg x_{m-1}]/\frakm^2$ of the polynomial ring on $m-1$ variables modulo the square of the ideal $\frakm = (x_1 \vvirg x_{m-1})$ generated by the variables.
\end{remark} 

\begin{remark}
The tensor $T_{\utriv,m}$ is concise. It has the largest dimensional symmetry group of any concise tensor we are aware of.
Note that the unique up to scale element of $\La 3\BC^3\subset \BC^3\ot \BC^3\ot \BC^3$ also has
an $m^2-1$ dimensional symmetry group with $m=3$ and this tensor is not $1_A$, $1_B$, or $1_C$-generic.
\end{remark}


\begin{problem}
Determine the largest possible dimension of the symmetry group of a concise tensor. Furthermore, classify concise tensors with symmetry groups of maximal dimension.
\end{problem}

\section{Proof of Lemma \ref{solnsizelem}}\label{biformsub}  

In this section we prove Lemma \ref{solnsizelem}, which classifies bilinear forms
on $\BC^k$  with symmetry group of  dimension at least $\binom k2$.

The annihilator $\frakh_\calB$ of $\calB$ is characterized by equation \eqref{eqn: Lie algebra of B}, which, by Lemma \ref{lemma: annihilator of direct sum}  is equivalent to the two conditions  
\begin{equation}\label{eqn: matrix equation bilinear form}
\begin{aligned}
& X Q + Q X^{\bft} = 0 ,\\ 
& X \Lambda + \Lambda X^{\bft} = 0 .
\end{aligned} 
\end{equation}
where $\calB = Q + \Lambda$ is the decomposition of $\calB$ in its symmetric and skew-symmetric component. 

%

Recall the notation
$W= E \oplus L \oplus F$  with $e=\dim E$, $f=\dim F$, and $\ell=\dim L$.  

If $e = 0$, then $\Lambda$ has full rank, and $\frakh_\calB$ is the annihilator of $Q \in S^2 W^*$ in $\fraksp(\Lambda) \subseteq \frakgl(W)$. In particular $\codim_{\fraksp(\Lambda)} (\frakh_\calB)$ equals the dimension of the $SP(\Lambda)$-orbit of $Q$.

\begin{itemize}
 \item If $\ell = 0$, then $Q=0$ and $f = k$. In this case $\calB = \Lambda$ and $\frakh_\calB = \fraksp(\Lambda)$, with $\dim \frakh_\calB = \binom{k+1}{2}$. This is case (i).

 \item If $\ell = 1$, then $\rk(Q) = 1$. Rank one elements in $S^2 W^*$ are equivalent under the action of $SP(\Lambda)$: the $SP(\Lambda)$-orbit of $Q$ is the affine cone over the Veronese variety $\nu_2(\bbP W^*)$, which 
has dimension $k$. Therefore $\dim \frakh_\bbta = \dim \fraksp(\Lambda) - k = \binom{k+1}{2} -k = \binom{k}{2}$. This is case (iii).
\end{itemize}

The Veronese variety $\nu_2(\bbP W^*)$ is the unique closed $SP(\Lambda)$-orbit in $\bbP S^2 W^*$. As a consequence, if $[Q] \in \bbP S^2 W^*$ is not a point in the Veronese variety, then $\dim (SP(\Lambda) \cdot Q) > \dim \nu_2(\bbP W^*)$ and therefore $\dim \frakh_\calB < \binom{k}{2}$. Thus  if $e=0$  and $\ell>1$, $\bbta$ is eliminated.

If $f = 0$, then $Q$ has full rank and $\frakh_\calB$ is the annihilator of $\Lambda \in \Lambda^2 W^*$ in $\frakso(Q) \subseteq \frakgl(W)$. In particular $\codim _{\frakso(Q)} (\frakh_\calB)$ equals the dimension of the $SO(Q)$-orbit of $\Lambda$.

\begin{itemize}
 \item If $\ell = 0$, then $\Lambda = 0$ and $e = k$. In this case $\calB = Q$ and $\frakh_\calB = \frakso(Q)$, with $\dim \frakso(Q) = \binom{k}{2}$, which  is case (ii).
\end{itemize}
If $\Lambda \neq 0$, then $\dim (SO(Q) \cdot \Lambda) > 0$, therefore $\dim \frakh_\calB < \binom{k}{2}$. This shows that if $f = 0$ and $\ell \geq 1$ then $\calB$ is eliminated.

Now suppose $\ell = 0$ and $e,f >0$. In this case $\frakh_\calB = \frakso(Q|_E) \oplus \fraksp(\Lambda|_F)$, therefore $\dim \frakh_\calB = \binom{e}{2} + \binom{f+1}{2}$. Write $f = k-e$ and consider $\dim \frakh_\calB$ as a function of $e$: 
\[
 \dim \frakh_\calB = \frac{1}{2} \left[e(e-1) + (k-e+1)(k-e)  \right] = \frac{1}{2} [ 2 e^2 - (2+2k)e + (k ^2+k)].
\]
The cases $e = 0$ and $e = k$ were considered above. 
\begin{itemize}
\item If $e = 1$, then $\dim \frakh_\calB = \binom{k}{2}$: this is case (iv).
\end{itemize}

For $e \in \{ 2 \vvirg k-1\}$, then $\dim \frakh_\calB < \binom{k}{2}$: indeed, the maximal value is attained at $e = 3$ and $e =k-1$ and it is $\dim \frakh_\calB = \frac{1}{2} ( k^2 - 3k + 4) = \binom{k-1}{2} + 2 < \binom{k}{2}$ whenever $k > 3$.

Finally, we show that if $e,\ell,f > 0$ then $\dim \frakh_\calB < \textbinom{k}{2}$, eliminating these cases.

Consider  the bilinear form $Q$ restricted to $E\op L$. It  can be fully normalized as 
\begin{equation}\label{eqn: representation Q on E plus L}
Q|_{E\op L}= \left(
\begin{array}{cccc}
\Id_{q} & 0 &0 & 0\\
0& 0 &\Id_{e-q}    & 0\\
0& \Id_{e-q}  &0 & 0\\
0&0  &0 & \Id_{\ell-e+q}
\end{array}\right)
\end{equation}
where $q = \rk(Q|_E : E^* \to E)$.

Moreover, writing $Q,\Lambda$ and $X$ in block form according to the decomposition $E \oplus L \oplus F$, we have
\[
 \calB = Q + \Lambda = \left[\begin{array}{ccc}
                              Q_{EE} & Q_{EL} & 0 \\
                              Q_{EL}^\bft & Q_{LL} & 0 \\
                              0 & 0 & 0
                             \end{array}
 \right] + \left[\begin{array}{ccc}
                              0 & 0 & 0 \\
                              0 & \Lambda_{LL} & \Lambda_{LF} \\
                              0 & -\Lambda_{LF}^\bft & \Lambda_{FF}
                             \end{array} \right]. \qquad  {\rm Write} \ 
X = \left[\begin{array}{ccc}
                              X_{EE} & X_{EL} & X_{EF} \\
                              X_{LE} & X_{LL} & X_{LF} \\
                              X_{FE} & X_{FL} & X_{FF} 
                              \end{array}
 \right] .
\]
Then  \eqref{eqn: matrix equation bilinear form} implies 
\begin{equation*}\label{eqn: blocks ELF sp and so}
\begin{array}{ll}
\left( \begin{smallmatrix} X_{EE} & X_{EL} \\ X_{LE} & X_{LL} \end{smallmatrix} \right) \in \frakso(Q|_{E \oplus L}),   &\left( \begin{smallmatrix} X_{LL} & X_{LF} \\ X_{FL} & X_{FF} \end{smallmatrix} \right) \in \fraksp(\Lambda|_{L \oplus F}), \\
~\\
\left(\begin{smallmatrix} X_{FE} & X_{FL} \end{smallmatrix} \right) = 0, &\left(\begin{smallmatrix} X_{EL} & X_{EF} \end{smallmatrix} \right) = 0 . 
\end{array}
 \end{equation*}
Consider the upper-left size $(e+\ell)\times (e+\ell)$ block of $X$. In blocking $(q,e-q,e-q,\ell-(e-q))$, we have
\[
 \left( \begin{array}{cc} X_{EE} & 0 \\ X_{LE} & X_{LL} \end{array} \right) = \left(\begin{array}{cccc}
  X_{11} & X_{12} & 0 & 0\\
 X_{21} & X_{22} & 0 & 0\\
  X_{31} & X_{32} & X_{33} & X_{34}\\
   X_{41} & X_{42} & X_{43} & X_{44}\end{array}\right).
\]
The condition $XQ + QX^\bft = 0$ provides 
\begin{align*}
\begin{array}{lclcl}
X_{21} = 0, & ~ & X_{31} = -X_{12}^\bft, & ~ & X_{11} \in \frakso_q, \\
X_{41} = 0, & & X_{42} = -X_{34}^\bft, & & X_{32} \in \frakso_{e-q}. \\
 & & X_{22} = -X_{33}^\bft, & & \\
\end{array}
\end{align*}
This gives the upper bound 
\[
 \dim \frakh_\calB \leq \dim \fraksp(\Lambda|_{L \oplus F}) + \underbrace{\dim \frakso_q}_{X_{11}} + \underbrace{\dim \frakso_{e-q}}_{X_{32}} + \underbrace{q \cdot (e-q)}_{X_{12}}.
\]
We conclude
\begin{equation}\label{eqn: bound from normalized Q}
 \dim \frakh_\calB - \binom{k}{2} \leq \binom{\ell + f+1}{2} + \binom{q}{2} + \binom{e-q}{2} +q(e-q) - \binom{k}{2}  = (1-e)(\ell+f).
 \end{equation}
Thus the case $e\geq 2$ is excluded from consideration.

We are left to analyze the case $e = 1$. In this case \eqref{eqn: bound from normalized Q} 
already implies  $\dim \frakh_\calB \leq \binom{k}{2}$ and it remains  to show that the bound is strict. We consider two cases:
 
  If $q =1$ or $q = 0$ and $\ell \geq 2$, then $Q|_L \neq 0$. In this case $\dim \frakh_\calB$ equals the dimension of the annihilator of $Q|_L$ in $\fraksp(\Lambda|_{L \oplus F})$, hence it is strictly smaller than $\dim \fraksp(\Lambda|_{L \oplus F}) = \binom{k}{2}$;

 If $q = 0$ and $\ell = 1$, then one concludes via a direct calculation showing that $\dim \frakh_\calB$ equals the dimension of the annihilator in $\fraksp(\Lambda|_{L \oplus F})$ of a generator of $L$.

This concludes the proof of the Lemma \ref{solnsizelem}. \qed

\section{Proof of Theorem \ref{symthmc}, part one}\label{bigpfsecta}

First, we prove a result which allows us to achieve a convenient normalization of a $1$-generic tensor
(Proposition \ref{prop: basic normalization}), similar to the standard presentation of the Coppersmith-Winograd tensor:
\begin{lemma}\label{lemma: normalization 1-generic}
 Let $T \in A \otimes B \otimes C$ be $1$-generic. Then there exist $\alpha \in A^*, \beta \in B^*, \gamma \in C^*$ such that 
 \begin{itemize}
  \item $T(\alpha,\beta,\gamma) = 0$ 
  \item $T(\alpha,-,-) \in B \otimes C$, $T(-,\beta,-) \in A \otimes C$,
    $T(-,-,\gamma) \in A\otimes B$ are full rank.
 \end{itemize}
\end{lemma}
\begin{proof}
 Let $\Omega_A := \{ \alpha \in A^* : T(\alpha,-,-) \text{ is full rank}\}$ and
 similarly $\Omega_B$, $\Omega_C$; then $\Omega_A,\Omega_B,\Omega_C$ are Zariski
 open in $A^*,B^*,C^*$ respectively. 
 Consider the regular map $T : \Omega_A \times \Omega_B \to C$. It has
 irreducible image which, by conciseness of $T$, is not contained in any
 hyperplane. Consequently, this image is the cone over a positive dimensional set,
 so the set
 \[
\Theta_C = \{ \gamma \in C^*: \gamma^\perp \cap T( \Omega_A \times \Omega_B) \neq \emptyset\}
 \]
 contains a Zariski open set. In particular, $\Theta_C \cap \Omega_C \ne 0$.
 
 Let $\gamma \in \Theta_C \cap \Omega_C$, let $c \in \gamma^\perp \cap T( \Omega_A \times \Omega_B) $ and let $(\alpha,\beta) \in  \Omega_A \times \Omega_B$ be an element satisfying $T(\alpha,\beta) = c$. The triple $(\alpha,\beta,\gamma)$ satisfies the desired conditions.
 \end{proof}

As a consequence of Lemma \ref{lemma: normalization 1-generic}, there exist bases $\{ a_i\}_{i = 1 \vvirg m}$ of $A$, $\{ b_i\}_{i = 1 \vvirg m}$ of $B$, $\{ c_i\}_{i = 1 \vvirg m}$ of $C$, such that 
\[
 T = a_1 \otimes \calB_A + \sigma_{12} ( b_1 \otimes \calB_B) + \calB_C \otimes c_1 +T':
\]
$\s_{12} \in \frakS_{3}$ is the permutation which swaps the first and second factors and 
\begin{itemize}
 \item $\calB_A,\calB_B,\calB_C$ are full rank bilinear forms;
 \item the coefficient of $b_1 \otimes c_1$ in $\calB_A$ is $0$, and similarly for the corresponding coefficient in $\calB_B$ and $\calB_C$;
 \item $T' \in \langle a_2 \vvirg a_m\rangle \otimes \langle b_2 \vvirg b_m\rangle \otimes \langle c_2 \vvirg c_m\rangle $.
\end{itemize}
Let $A' = \langle a_2 \vvirg a_m\rangle$, $A'' = \langle a_2 \vvirg a_{m-1}\rangle$ and similarly on the other factors. 
In this basis consider the coefficient of $b_1\ot c_1$ in $T$; by the above it lies in
$A'$ and is nonzero. Thus we may further change basis in $A'$ so that this part of
$T$ is exactly $a_m\ot b_1\ot c_1$. Doing the same with $B'$ and $C'$, we 
may additionally assume 
\begin{equation*}
\begin{aligned}
  T &= a_1 \otimes b_1 \otimes c_m + a_1 \otimes b_m \otimes c_1 + a_m \otimes b_1 \otimes c_1 \\
& \quad +a_1 \otimes \calB''_A + \sigma_{12} ( b_1 \otimes \calB''_B)
+ \calB''_C \otimes c_1 + \tilde{T}
\end{aligned}
 \end{equation*}
where $\calB''_A \in B'' \otimes C''$, $\calB''_B \in A'' \otimes C''$, $\calB''_C \in A'' \otimes B''$ are full rank bilinear forms, and 
\begin{equation}\label{eqn: tildeT}
\begin{aligned}
\tilde{T} \in A' \otimes B' \otimes C' 
&\op a_1\ot b_m\ot C' \op a_m\ot b_1\ot C' \\
&\op a_1\ot B'\ot c_m\op a_m\ot B'\ot c_1 \\
&\op A'\op b_1\ot c_m\op A'\op b_m\ot c_1.
\end{aligned}
\end{equation}
Now, $\calB_A''$ defines an isomorphism $B'' \simeq (C^*)''$, and similarly $\calB_B''
: A'' \simeq (C^*)''$. Changing bases in $B''$ and $C''$ again, we may suppose
these bases are dual to the distinguished basis in $A''$ under these
isomorphisms. In these bases $\calB_B''$ and $\calB_C''$ are each given by the
identity matrix. Writing $\calB = \calB_A''$, we have shown
\begin{proposition}\label{prop: basic normalization}
  After a change of bases, a $1$-generic tensor $T\in A\ot
  B\ot C$ may be written as $T = S_\calB + \tilde{T}$, where
\begin{equation} \label{eqn: SB}
  \begin{aligned}
    S_\calB &= a_1 \otimes b_1 \otimes c_m + a_1 \otimes b_m \otimes c_1 + a_m \otimes b_1 \otimes c_1 \\
    & \textstyle \quad + \sum_{\rho =2}^{m-1}[a_1 \otimes  b_\rho\ot c_\rho  +
    a_\rho\ot  b_1 \ot c_\rho] + \calB \otimes c_1 
  \end{aligned}
\end{equation}
and $\tilde{T}$ is as in \eqref{eqn: tildeT}.
\end{proposition}

We have called such tensors $S_\calB$ \emph{skeletal}.

\begin{proposition}\label{prop: degeneration to CW from T}
  A tensor $T = S_\calB + \tilde{T}$ normalized as in Proposition \ref{prop: basic normalization} degenerates to $S_\calB$.
\end{proposition}
\begin{proof}
We determine a degeneration which fixes $S_\calB$ and degenerates $\tilde T$ to $0$.
Define $f_\eps \in GL(A)$, $g_\eps \in GL(B)$, $h_\eps \in GL(C)$ as follows:
\[
 \begin{array}{rl}
  f_\eps : & a_1 \mapsto \frac{1}{\eps^2} a_1 \\
  & a_j \mapsto  \eps a_j \quad j=2 \vvirg m-1 \\
  & a_m \mapsto \eps^4 a_m \\
  ~ \\
  g_\eps : & b_1 \mapsto \frac{1}{\eps^2} b_1 \\
  & b_j  \mapsto \eps b_j \quad j=2 \vvirg m-1 \\
  & a_m \mapsto \eps^4 a_m \\
  ~\\
  h_\eps : & c_1 \mapsto \frac{1}{\eps^2} c_1 \\ 
 & c_j  \mapsto \eps c_j \quad j=2 \vvirg m-1 \\
 & c_m \mapsto \eps^4 c_m.
 \end{array}
\]
Then $(f_\eps, g_\eps,h_\eps)(S_\calB) = S_\calB$ and $\lim_{\eps \to 0} (f_\eps
, g_\eps , h_\eps)(\tilde{T}) = 0$, therefore 
\[
 \lim_{\eps \to 0} (f_\eps , g_\eps , h_\eps)(T) = S_\calB. \qedhere
\]
 \end{proof}

It will be convenient to consider $1$-generic tensors normalized as in Proposition 
\ref{prop: basic normalization} more invariantly. In particular, as in the
remarks preceding the proposition, there are natural
identifications $A'' \leftrightarrow B'' \leftrightarrow (C^*)''$. 
Denote these common spaces by $M^*$, so that we have
identifications $A \leftrightarrow L_1^A \op M^* \op L^A$, $B \leftrightarrow
L_1^B \op M^* \op L^B$ and $C \leftrightarrow L_1^C \op M \op L^C$. Then
$S_\calB$ has the expression
\begin{align*}
  S_\calB &= a_1 \otimes b_1 \otimes c_m + a_1 \otimes b_m \otimes c_1 + a_m \otimes b_1 \otimes c_1 \\
& \quad + a_1 \otimes \Id_{M} + \sigma_{12} ( b_1 \otimes \Id_{M}) + \calB \otimes c_1,
\end{align*}
where $\calB \in M^* \ot M^*$. In this presentation it clear that $GL(M) \subset
GL(A) \times GL(B)\times GL(C)$ acts on $S_\calB$ exactly as its action on
bilinear forms $\calB$. More generally, we have 

\begin{proposition}\label{prop: SB sym}
Let $m \geq 5$. Let $\frakh_\calB$ be the annihilator of $\calB$  under the
action of $\frakgl(M^*)$. 
 Then the following is a convenient choice of  lift of $\fg_{S_\bbta}$ to $\fgl(A)\op \fgl(B)\op \fgl(C)$:
\begin{align}\label{LieTb}
&\fg_{S_{\bbta}}=\\
\nonumber   & \left\{\begin{pmatrix}
      2t & \bfv             & u^1_m \\
      0      & X-t \Id_{M^*} & \bfv\calB-\bfu \\
      0      & 0                 & -4t
    \end{pmatrix},
    \begin{pmatrix}
      2t & \bfw             & v^1_m \\
      0      & X-t \Id_{M^*} & \calB \bfw-\bfu \\
      0      & 0                 & -4t    \end{pmatrix},
    \begin{pmatrix}
      2 t & \bfu-\bfv\calB -\calB \bfw & w^1_m \\
      0       & -X^\mathbf{t}-t \Id_M              & -\bfv-\bfw \\
      0       & 0                             & -4t
  \end{pmatrix} \right\}
\end{align}
where  $\bfu \in M^*$, $\bfv,\bfw \in M$, $t\in \BC$, 
$u^1_m+ v^1_m+ w^1_m = 0$, $X \in \frakh_\calB$. In particular, $\dim \frakg_{S_\calB} = 3m -3 + \dim \frakh_\calB$.
\end{proposition}
\begin{proof}
   Fix the index range $\rho,\sigma,\tau = 2 \vvirg m-1$.
Then  \eqref{killeqn} specializes to the following:
\begin{equation*}
  \begin{array}{ll}
(111) & u_m^1 + v_m^1 + w_m^1 = 0 \\
~\\
(11\tau) & u_\tau^1 + v_\tau^1 + w_m^\tau = 0 \\
(1\sigma1) & u_{\sigma'}^1 {\calB}^{\sigma' \sigma} + v_m^\sigma + w_\sigma^1 = 0 \\
(\rho11) & u_m^\rho + v_{\rho'}^1 {\calB}^{\rho \rho'}  + w_\rho^1 = 0 \\
~\\
(1\sigma \tau) & u_1^1 \delta^{\sigma\tau} + v_\tau^\sigma + w_\sigma^\tau = 0 \\
(\rho 1 \tau) & u_\tau^\rho + v_1^1 \delta^{\rho\tau}  + w_\rho^\tau = 0 \\
(\rho\sigma1) & u_{\rho'}^\rho \calB^{\rho'\sigma} + v_{\sigma'}^\sigma \calB^{\rho\sigma'} + w_1^1 \calB^{\rho \sigma} = 0\\
~\\
(\rho\sigma\tau) & u_{1}^{\rho} \delta^{\sigma\tau} + v_1^{\sigma} \delta^{\rho\tau} + w_1^\tau \calB^{\rho \sigma} =0 \\
~\\
(11m) & u_1^1 + v_1^1 + w_m^m = 0 \\
(1m1) & u_1^1 + v_m^m + w_1^1 = 0 \\
(m11) & u_m^m + v_1^1 + w_1^1 = 0 \\
~\\
(1mm) &  v_1^m + w_1^m = 0 \\
(m1m) & u_1^m +  w_1^m = 0 \\
(mm1) & u_1^m + v_1^m = 0 \\
~\\
(1\sigma m) & v_1^\sigma + w_\sigma^m = 0 \\
(m\sigma1) & u_{\rho'}^m \calB^{\rho'\sigma} + v_1^\sigma = 0 \\
(\rho 1 m) & u_1^\rho + w_\rho^m = 0 \\
(\rho m1) & u_1^\rho + v_{\sigma'}^m \calB^{\rho\sigma'}  = 0 \\
(1m\tau) & v_\tau^m + w_1^\tau = 0 \\
(m1\tau) & u_\tau^m + w_1^\tau = 0 \\
  \end{array}
 \end{equation*}
 Equation $(111)$ determines $w^1_m$.
 Write $\ol u=(u^1_2\hd u^1_{m-1})$, $\hat u=(u^2_m\hd u^{m-1}_m)^\bt$ and similarly for $\ol v,\ol w$ and
 $\hat v,\hat w$. The next three equations
 imply
 \begin{align*}
 \hat u&=-\bbta \ol v^\bt-\ol w^\bt\\
 \hat v&=-(\ol u \bbta)^\bt -\ol w^\bt\\
 \hat w&=- \ol u^\bt-\ol v^\bt
 \end{align*}
 Equations $(1\s\t),(\rho 1\t)$ allow us to solve
 for $v^\s_\t,u^\rho_\t$, and plugging the solution into $(\rho\s 1)$ gives
 \begin{align*}
& \bar{W} \calB + \calB \bar{W}^{\bft} = 0\\
&u^1_1+v^1_1+w^1_1=0
\end{align*}
where $\bar{W} = (w_\rho^\sigma)_{\rho,\sigma = 2 \vvirg m-1}$. The
first line  is equivalent to the assertion $\bar{W} \in \frakh_\calB$.

Ignoring the $(\rho\s\t)$ equations for the moment, the next three equations
determine $w^m_m,v^m_m,u^m_m$ as in \eqref{LieTb}, and the next three
form a system that implies $u^m_1,v^m_1,w^m_1=0$.

For every fixed $\rho$, let $\bar{\rho}$ be such that $\bbta^{\rho,\ol\rho}\neq 0$. Then the last six equations form a full rank system in six unknowns, giving 
$u^\rho_1,v^\rho_1,w^{\ol\rho}_1,u^m_{\ol\rho},v^m_{\ol\rho},w^m_\rho=0$.

Finally at this point the equations $(\rho\s\t)$ are trivial and we obtain  \eqref{LieTb} which has
the asserted dimension.
\end{proof}

All of the claims about the exceptional tensors in the theorem follow at once
from Proposition \ref{prop: SB sym}.

It remains to show the upper bound on the dimension of the symmetry group of an
arbitrary $1$-generic tensor $T$. Write $T = S_\calB + \tilde{T}$ as in
Proposition \ref{prop: basic normalization}.
By Lemma \ref{lemma: highest weight vector}, Proposition 
\ref{prop: degeneration to CW from T}, and Proposition \ref{prop: SB sym}, we have
\begin{equation}
  \dim \frakg_T \le \dim \frakg_{S_\calB} = 3m-3+\dim \frakh_\calB,
\end{equation}
with equality if and only if $T \simeq S_{\calB}$. Thus, except in the case
$\calB$ is a nondegenerate skew-symmetric bilinear form, Lemma \ref{solnsizelem}
establishes the remaining claims of the theorem.

We must analyze the case when $\calB = \Lambda$ is a nondegenerate skew-symmetric form. 
In this case $m-2$ is even, so $m = 2p$ is even as well. The analysis above gives
\[
\tdim
\fg_T\leq \tdim \fg_{S_{\Lambda}} = 3m-3+\tdim \fsp(\Lambda)=\frac{m^2}2+\frac {3m}2-2
\]
so it remains to prove that if $\tilde{T}\neq 0$, the dimension drops by at least $m-1$.

\section{End of proof  of Theorem \ref{symthmc}:  the remaining case} \label{longcalc} 
  \def\fu{\mathfrak u}
  \def\ft{\mathfrak t}

First, we obtain a more specific normalization than that shown in Proposition
\ref{prop: basic normalization}. Write $T = S_\calB + \tilde{T}$ as
before, and now change bases such that
\begin{align*}
a_m&\mapsto a_m+ u^1_ma_1\\
b_m&\mapsto b_m+ v^1_mb_1\\
c_m&\mapsto c_m+ w^1_mc_1\\
a_1 &\mapsto a_1+ u^\rho_1a_\rho \\
b_1 &\mapsto b_1+ v^\rho_1b_\rho \\
c_1 &\mapsto b_1+ w^\rho_1c_\rho 
\end{align*}
Then
\begin{align*}
T^{1mm}&\mapsto T^{1mm}+v^1_m+w^1_m\\
T^{m1m}&\mapsto T^{m1m}+u^1_m+w^1_m\\
T^{mm1}&\mapsto T^{mm1}+u^1_m+v^1_m\\
T^{1\rho m}&\mapsto T^{1\rho m}+v^\rho_1 \\
T^{m\rho 1}&\mapsto T^{m\rho 1}+v^\rho_1 \\
T^{1m\rho}&\mapsto T^{1m\rho}+w^\rho_1 \\
T^{m1\rho}&\mapsto T^{m1\rho}+w^\rho_1 \\
T^{ \rho 1m}&\mapsto T^{\rho 1m}+u^\rho_1 \\
T^{ \rho m1}&\mapsto T^{ \rho m1}+u^\rho_1 
\end{align*}
and the only other entries of $T$ which change are $T^{ijk}$, where $i,j,k \ge
2$. In particular, after such a change of variables, $T$ is still of the form
$S_\calB + \tilde{T}$.
Choose $u^1_m,v^1_m,w^1_m$ such that
$-T^{1mm}=v^1_m+w^1_m$, 
$-T^{m1m}=u^1_m+w^1_m$, and 
$- T^{mm1}=u^1_m+v^1_m$, to send these quantities to zero.
Choose $u^\rho_1=-T^{\rho 1m}$, $v^\rho_1=-T^{1\rho m}$ and $w^\rho_1=-T^{1m\rho}$
to send these quantities to zero. We have shown that an arbitrary $1$-generic
tensor may be written as $S_\calB + \tilde T$, where now 
\begin{equation*}
\tilde{T} \in A' \otimes B' \otimes C' 
\op a_m\ot b_1\ot C'' 
\op a_m\ot B''\ot c_1 
\op A''\op b_m\ot c_1.
\end{equation*}

 In the skeletal case we have $\fu:=\langle u^1_1,u^1_\rho,u^1_m,v^1_1,v^1_\s,v^1_m,w^1_1, w^1_\t\rangle $
 (which is of dimension $3m-1$) independent and
 independent of elements of $\fsp(M)$. We need to show that after our
 normalizations,  if $\tilde{T} \neq 0$, then there are at least
 $m-1$ relations among the elements of $\fu,\fsp(M)$. 

Fix index ranges
$2\leq \xi,\eta\leq p$,  $2\leq \rho,\s,\t\leq m-1$. Write
$\fn:=\langle u^1_\rho, v^1_\s, w^1_\t, u^1_m,v^1_m\rangle$,
the nilpotent part of $\fu$.

We have
$T^{111},T^{11\t},T^{1\s 1},T^{\rho 11},T^{mm*},T^{m*m},T^{*mm} =0$, $T^{11m}=T^{1m1}=T^{m11}=1$,
and by the normalization lemma $T^{1m\rho}=T^{1 \rho m}=T^{\rho 1m}=0$.

  Let $\ep^\rho=1$ if $\rho\leq p$ and $\ep^\rho=-1$ if $\rho>p$. 
Let $\bar\rho=\rho+p$ if $\rho\leq p$ and $\bar\rho=\rho-p$ if $\rho>p$.
We have $T^{\rho\s 1}=\ep^\rho\d^{\s\ol\rho}$.  Write $A=L_1^A\op M^A\op L_m^A$ etc...

\subsection{Outline} ~ \\ 
Step 1: Solve
$u^\s_m,v^\s_m,w^\s_m,w^1_m$ (i.e., elements of $L_m^A\ot (M^A)^*, L_m^B\ot (M^B)^*,L_m^C\ot (M^C)^*, L_m^C\ot (L_1^C)^*$) in terms of $\fu$, solve  
$u^\rho_\s,v^\rho_s$ (i.e., elements of $\fgl(M)_A,\fgl(M)_B$) in terms of $\fu\op \fsp(M)_C\op \fsp(M)_C^c$,
and  solve $u^m_m,v^m_m,w^m_m$ in terms of $\fu$.

Step 2: Using $(\rho\s 1)$ solve $\fsp(M)_C^c$ in terms of $\fu$.

Step 3: use $(\rho\s\t)$ to severely restrict $T^{\rho\s\t}$. Namely
among the irreducible modules in $M^{\ot 3}$ only the highest weight vector of $S^3M$ and the three copies
of $M$ (which we denote $M_A^s,M_B^s,M_C^s$ can occur, i.e., we have $3(m-2)+1$ parameters instead of $(m-2)^3$.
Moreover, if a highest weight vector is nonzero, at most  one more relation
among elements of $\fu\op \fsp(M)_C$ is allowed.

Step 4:  
We observe that the $\fsp(M)\op \fsp(M)^c$ terms   do not appear if we symmetrize the $(\rho\s 1)$ equations.
In the special case $\rho=\s=2$, we obtain in particular
$u^1_2T^{22(p+2)}\equiv 0\tmod$ other basis elements of $ \fu$, which shows that if $T^{22(p+2)}\neq 0$,
no more relations among elements of $\fu\op \fsp(M)_C$ are allowed.
Assuming $T^{22(p+2)}\neq 0$, we  obtain three relations among six modules isomorphic to $M$ that must
occur if no further relations among the $\fu$ are to occur. Explicitly the relations are 
   $L_m^A\ot M^B\ot L_1^C=T^{m\s 1}=0$, $M_B^s=t^{ \s}_B= \ep^\s t^{\ol\s}_C=M_C^s$,
$M^A\ot L_m\ot L_1=t^\s_A=\ep^{\s}T^{m1{\ol\s}}  +T^{m\s 1} -  \ep^\s t^{\ol\s}_C=L_m^A\ot L_1^B\ot M^C+
L_m^A+L_m^A\ot M^B\ot L_1^C+ M_C^s$.
 
 Step 5: Using $(\rho\s\t)=(22(p+2))$ modulo $\fu$  we obtain another relation which shows
 $T^{22(p+2)}=0$.
 
 Step 6: We revisit the symmetrized $(\rho\s 1)$ equations with the knowledge $T^{22(p+2)}=0$
 and obtain $3$ expressions involving the six modules
 $T^{m\s 1}$ (i.e. $M^A\ot L^B_m\ot L^C_1$), $T^{m1\s}$, $T^{\s m1}$ (recall
 that its other permutations have been normalized to zero), and $t_A^\rho,t^\s_B,t^\t_C$
 which are defined to be the three copies of $M$ in $M^{\ot 3}$, and if any of these
 expressions is nonzero, we obtain $m-2$ relations and if any two are nonzero we obtain more than
 $m-1$ relations so at most one of the expressions may be nonzero.
 
 Step 7: We explicitly solve for $u^\s_1,v^\s_1,w^{\ol \s}_1,u^m_{\ol\s}, v^m_{\ol\s},w^m_\s$
  in terms of elements of $\fn\op \fsp(M)\op \fsp(M)^c$.
  
  Step 8: We consider the $(\rho\s\t)$ equations modulo $\fu$ and obtain $6$ additional expressions.  
  among the $6$ quantities of Step 6, and observe that at most two  of these $6$ expressions can be nonzero,
  as each imposes $\frac{m-2}2$ conditions. Even if two of the expressions are nonzero, we have
  enough equations combined with the previous to show all the terms defined in Step 6 are zero.

 Step 9: We show $T^{\rho\s m},T^{\rho m\t}, T^{m\s\t}$, each of which lie in $M\ot M\sim M^*\ot M$ (tensored with a line) are zero by first
 considering the $(\rho\s m)$ equations and their permutations modulo $\fn$
 to show  they can only be the trivial representation plus the highest weight vector in
 $S^2M$ and if the highest weight vector in $S^2M$ is nonzero, there can be at most
 two more relations. Here we reduce from $3(m-2)^2$ parameters to $6$.
 We then use the $(\rho\s\t)$ equations to first eliminate the highest weight vector and then
 to eliminate the trivial module.
 
 Step 10: We show $T^{\rho mm}$, $T^{m\s m}$, and $T^{mm\t}$ are zero using the $(\rho\s m)$ equations.
 
 Step 11: We show the last unassigned $T^{ijk}$, namely $T^{mmm}$ is zero via the
 $(\rho mm)$, $(m\s m)$ and $(mm\t)$ equations to complete the proof.

\subsection{Preliminaries}
  
 We write out some of the equations:

 \begin{equation*}
  \begin{array}{ll}
(111) & u_m^1 + v_m^1 + w_m^1 = 0 \\
~\\
(11\tau) & u_\tau^1 + v_\tau^1 + w_m^\tau +u^1_mT^{m1\t}= 0 \\
(1\sigma1) & \ep^{\s} u_{\bar\s }^1  + v_m^\sigma + w_\sigma^1+u^1_mT^{m\s 1} = 0 \\
(\rho11) & u_m^\rho + \ep^\rho v_{\bar\rho}^1    + w_\rho^1 +v^1_mT^{\rho m 1}= 0 \\
~\\
(1\sigma \tau) & u_1^1 \delta^{\sigma\tau}+u^1_\rho T^{\rho\s\t} +u^1_m T^{m\s\t} + v_\tau^\sigma 
 + w_\sigma^\tau = 0 \\
(\rho 1 \tau) & u_\tau^\rho+ u^\rho_mT^{m1\t} + v_1^1 \delta^{\rho\tau} + v^1_\s T^{\rho\s\t}+ v^1_m T^{\rho m\t}   + w_\rho^\tau 
= 0 \\
(\rho\sigma1) & -\ep^{\s} u_{\ol\s}^\rho   + \ep^{\rho}v_{\ol \rho}^\sigma   + w_1^1\ep^\rho \d^{\s\ol{\rho}}
+w^1_\tau T^{\rho\s\t}+w^1_m T^{\rho\s m}+u^\rho_mT^{m\s 1} + v^\s_m T^{\rho m1} = 0\\
~\\
(\rho\sigma\tau) & u_{1}^{\rho} \delta^{\sigma\tau} + v_1^{\sigma} \delta^{\rho\tau} + w_1^\tau \ep^\rho\d^{\ol{\s}\rho}
+u^\rho_{\rho'}T^{\rho'\s\t}+u^\rho_{m}T^{m\s\t}+v^{\s}_{\s'}T^{\rho\s'\t}+v^{\s}_{m}T^{\rho m\t} + w^\t_{\t'}T^{\rho\s\t'}
+ w^\t_{m}T^{\rho\s m} =0 \\
~\\
(11m) & u_1^1 + v_1^1 + w_m^m = 0 \\
(1m1) & u_1^1 + v_m^m + w_1^1 +u^1_\rho T^{\rho m 1}= 0 \\
(m11) & u_m^m + v_1^1 + w_1^1+v^1_\s T^{m\s 1} + w^1_\t T^{m1\t} = 0 \\
~\\
(1mm) & u^1_\rho T^{\rho mm}+u^1_m T^{mmm}+ v_1^m + w_1^m = 0 \\
(m1m) & u_1^m +v^1_\s T^{m\s m} + v^1_m T^{mmm}+  w_1^m+ w^1_\t T^{m1\t}  = 0 \\
(mm1) & u_1^m + v_1^m+ w^1_\t T^{mm\t} + w^1_m T^{mmm}+ u_\rho^mT^{\rho m 1} +v^m_\s T^{m\s 1}  = 0 \\
~\\
(1\sigma m) & u^1_\rho T^{\rho\s m} + u^1_m T^{m\s m}+  v_1^\sigma + w_\sigma^m
  = 0 \\
(\rho 1 m) & u_1^\rho + v^1_\s T^{\rho\s m} + v^1_m T^{\rho mm} + w_\rho^m = 0 \\
(m\sigma1) & -\ep^\s u_{\bar\s }^m  + v_1^\sigma + w^1_\t T^{m\s\t} + w^1_m T^{m\s m} 
+u^m_mT^{m\s 1} + v^\s_{\s'}T^{m\s' 1}+ w^1_1 T^{m\s 1}
 = 0 \\
(\rho m1) & u_1^\rho + \ep^\rho v_{\bar\rho}^m  + w^1_\t T^{\rho m\t} + w^1_m T^{\rho mm}
+u^\rho_{\rho'}T^{\rho' m1}+ v^m_mT^{\rho m1}+ w^1_1T^{\rho m1} = 0 \\
(1m\tau) & u^1_\rho T^{\rho m \t} + u^1_m T^{m m\t}+ v_\tau^m + w_1^\tau = 0 \\
(m1\tau) & u_\tau^m +v^1_\s T^{m\s\t} + v^1_m T^{mm\t}+  w_1^\tau
+u^m_mT^{m1\t}+ v^1_1 T^{m1\t} + w^\t_{\t'}T^{m1\t'} = 0 \\
  \end{array}
\end{equation*}
 
 The asymmetry in some of these  expressions  is caused by our normalizations.

\subsection{Step 1}     
We  solve:
    \begin{align*}
(111)\ \  w^1_m&=-(u^1_m+v^1_m)\\
(11\t)\ \  w^\t_m&=-(u^1_\t+v^1_\t+u^1_mT^{m1\t})\\  
(\rho 11)\ \  u^\s_m&=-(w^1_\s+\ep^\s v^1_{\ol \s}+u^1_mT^{m\s 1})\\
(1\s 1)\ \  v^\s_m&=-(\ep^\s u^1_{\ol \s} + w^1_\s  +v^1_mT^{\s m1} )\\
(\rho 1\t)\ \ u^\rho_\t  &=-[w^\t_\rho + v^1_1\d^{\rho\t}+v^1_\s T^{\rho\s\t} + v^1_m T^{\rho m \t}-(w^1_\rho+\ep^\rho v^1_{\ol \rho}+u^1_mT^{m\rho 1})T^{m1\t}]\\
(1\s\t)\ \  v^\s_\t&=-[w^\t_\s+u^1_1\d^{\s\t}+u^1_\rho T^{\rho\s\t}+u^1_mT^{m\s\t}\\
(11m)\ \ w^m_m&=-(u^1_1+v^1_1)\\
(1m1)\ \ v^m_m&=-(u^1_1+w^1_1+u^1_\rho T^{\rho m1})\\
(m11)\ \ u^m_m&=- (v_1^1 + w_1^1+v^1_\s T^{m\s 1} + w^1_\t T^{m1\t}).
\end{align*}

 \subsection{Step 2} 
 Write $w^\rho_\s=x^\rho_\s+y^\rho_\s$ where $(x^\rho_\s)\in \fsp(M)\subset \fgl(M)$ and $(y^\rho_\s)\in \fsp(M)^c$
where $\fsp(M)^c\subset \fgl(M)$ is the complementary $\fsp(M)$-module to $\fsp(M)$ in $\fgl(M)$.
We have   $\binom{m-2}2$ relations on the $x^\rho_\s$: $\ep^\s x^\s_\rho+\ep^\rho x^{\ol\rho}_{\ol\s}=0$, which
may also be written $\ep^\s x^{\ol\s}_\rho-\ep^\rho x^{\ol\rho}_{\s}=0$.
The $\binom{(m-2)+1}2$ relations on the $y^\rho_\s$ are 
$\ep^\s y^\s_\rho-\ep^\rho y^{\ol\rho}_{\ol\s}=0$.
Thus if we consider

\begin{align*}
 (\rho\sigma1)\ 0= &  \ep^{\s} [w^{\ol\s}_\rho + v^1_1\d^{\rho{\ol\s}}+v^1_{\s '}T^{\rho{\s'}{\ol\s}} + v^1_m T^{\rho m {\ol\s}}-(w^1_\rho+\ep^\rho v^1_{\ol \rho}+u^1_mT^{m\rho 1})T^{m1{\ol\s}}] \\
 &
   - \ep^{\rho} [w^{\ol\rho}_\s+u^1_1\d^{\s{\ol\rho}}+u^1_{\rho'} T^{\rho'\s{\ol\rho}}+u^1_mT^{m\s{\ol\rho}}
]  + w_1^1\ep^\rho \d^{\s\ol{\rho}}
+w^1_\tau T^{\rho\s\t}+(u^1_m+v^1_m) T^{\rho\s m} \\
&
-(w^1_{\rho}+\ep^{\rho} v^1_{\ol {\rho}}+u^1_mT^{m{\rho} 1})T^{m\s 1} 
-(\ep^\s u^1_{\ol \s} + w^1_\s  +v^1_mT^{\s m1} ) T^{\rho m1}  \\
\end{align*}
  the $x^\rho_\s$ are eliminated 
and the $\binom{m-2}2$ equations exactly allow us to solve for the $\binom{m-2}2$ independent $y^\rho_\s$
in terms of the elements of $\fu$. 
We obtain
\begin{align*}
& \ep^\s y^{\ol\s}_\rho-\ep^\rho y^{\ol\rho}_\s \\
&=
   \ep^{\s} [  v^1_1\d^{\rho{\ol\s}}+   v^1_{\s '}T^{\rho{\s'}{\ol\s}} + v^1_m T^{\rho m {\ol\s}}-(w^1_\rho+\ep^\rho v^1_{\ol \rho}+u^1_mT^{m\rho 1})T^{m1{\ol\s}}] \\
 &
   - \ep^{\rho} [u^1_1\d^{\s{\ol\rho}}+ u^1_{\rho'} T^{\rho'\s{\ol\rho}}+u^1_mT^{m\s{\ol\rho}}
]  \\
&+ w_1^1\ep^\rho \d^{\s\ol{\rho}}
+w^1_\tau T^{\rho\s\t}+(u^1_m+v^1_m) T^{\rho\s m}-(w^1_{\rho}+\ep^{\rho} v^1_{\ol {\rho}}+u^1_mT^{m{\rho} 1})T^{m\s 1} 
-(\ep^\s u^1_{\ol \s} + w^1_\s  +v^1_mT^{\s m1} ) T^{\rho m1}  \\
&=
\d^{\rho{\ol\s}}(\ep^{\s}v^1_1+\ep^\rho w^1_1-\ep^{\rho}u^1_1)
  - \ep^{\rho}   u^1_{\rho'} T^{\rho'\s{\ol\rho}}- \ep^\s u^1_{\ol \s}T^{\rho m1} 
+u^1_m(\ep^{\s}T^{m\rho 1} T^{m1{\ol\s}}-\ep^\rho  T^{m\s{\ol\rho}}+  T^{\rho\s m}+ T^{m{\rho} 1}T^{m\s 1})\\
&
+
 v^1_{\s '}\ep^{\s}    T^{\rho{\s'}{\ol\s}}-v^1_{\ol \rho}(\ep^\s\ep^\rho T^{m1{\ol\s}}+\ep^\rho T^{m\s 1} )
 +v^1_m(\ep^{\s} T^{\rho m {\ol\s}}+ T^{\rho\s m}-T^{\s m1}   T^{\rho m1} )\\
 &+w^1_\rho(\ep^{\s}T^{m1{\ol\s}}-T^{m\s 1})-w^1_\s T^{\rho m1}  
 +w^1_\tau T^{\rho\s\t} .
\end{align*}
   
\subsection{Step 3}
Consider $(\rho\s\t)$ 
\begin{align*}
  0=& u_{1}^{\rho} \delta^{\sigma\tau} + v_1^{\sigma} \delta^{\rho\tau} + w_1^\tau \ep^\rho\d^{\ol{\s}\rho}\\
&-[w^{\rho'}_\rho + v^1_1\d^{\rho{\rho'}}+v^1_{\s'} T^{\rho\s'{\rho'}} + v^1_m T^{\rho m {\rho'}}-(w^1_\rho+\ep^\rho v^1_{\ol \rho}+u^1_mT^{m\rho 1})T^{m1{\rho'}}]T^{\rho'\s\t}\\
& -(w^1_{\rho}+\ep^{\rho} v^1_{\ol {\rho}}+u^1_mT^{m{\rho} 1})T^{m\s\t}\\
&-[w^{\s'}_\s+u^1_1\d^{\s{\s'}}+u^1_{\rho'} T^{\rho'\s{\s'}}+u^1_mT^{m\s{\s'}}
-(w^1_\s+\ep^\s u^1_{\ol \s}+v^1_mT^{m\s 1})T^{1m{\s'}}]T^{\rho\s'\t}\\
&-(\ep^\s u^1_{\ol \s} + w^1_\s  +v^1_mT^{\s m1} )T^{\rho m\t}\\
& + w^\t_{\t'}T^{\rho\s\t'}
  -(u^1_\t+v^1_\t+u^1_mT^{m1\t})T^{\rho\s m}   \\
\end{align*}
Specialize to where  $\rho\neq \ol\s$, $\rho\neq \t$, $\s\neq \t$ and,
 after writing $w^\rho_\s=x^\rho_\s+y^\rho_\s$,  notice that modulo 
 $\fu$, we just have
the action of $\fsp(M)_C$ on $M\ot M\ot M$.
The stabilizers of a highest weight vector (and hence any vector) in the modules
$M_{\omega_3}$ and $M_{\omega_1+\omega_2}$ have codimension greater than $ m-1 $, so
$T^{\rho\s\t}a_\rho\ot b_\s \ot c_\t$ cannot have nonzero components in these modules.
Moreover, for the $S^3M$ component, unless it  is a highest weight vector, the case is similarly eliminated.
The three copies corresponding to $M$ are exactly the three cases $\rho=\ol\s$, $\rho=\t$, $\s=\t$
and our analysis says nothing about them so far.

Thus the only components that are possibly nonzero in $M\ot M\ot M$ are    $S^3M$ and the three copies of  $M$,
and we may assume the component in $S^3M$ is a highest weight vector which we take to be
$a_2\ot b_2\ot c_{p+2}$.
That is
\begin{align*}
&T^{\rho\s\t}a_\rho \ot b_\s \ot c_\t\\
&= 
T^{22 p+2}a_2\ot b_2\ot c_{p+2} + \ep^\rho t_{C}^{\rho\ol\rho\t'}a_\rho\ot b_{\ol \rho} \ot c_{\t'}
+t_A^{\rho'\s\s}  a_{\rho'}\ot b_\s\ot c_\s + t_B^{\rho \s'\rho }   a_\rho \ot b_{\s'}\ot c_\rho\\
&:=T^{22 p+2}a_2\ot b_2\ot c_{p+2} +  t_{C}^{ \t'}\sum_\rho \ep^\rho a_\rho\ot b_{\ol \rho} \ot c_{\t'}
+t_A^{\rho'} \sum_\s a_{\rho'}\ot b_\s\ot c_\s + t_B^{  \s'  } \sum_\rho  a_\rho \ot b_{\s'}\ot c_\rho
\end{align*}where $t_{C}^{\rho\ol\rho\t'}$ is independent of $\rho$,
$t_A^{\rho'\s\s} $ is   independent of $\s$, $t_B^{\rho \s'\rho }$
 is independent of $\rho$.
 
The parabolic stabilizing the highest weight vector in $S^3M$ has codimension
$m-3$ in $\fsp(M)$,
  so if there is a drop of dimension by two more, we must have $T^{22p+2}=0$.

\subsection{Step 4} Consider $(\rho\s 1)+(\s\rho 1)$ to eliminate the $w^\t_{\t'}$:
\begin{align*}
0&=  \ep^{\s} [  v^1_1\d^{\rho{\ol\s}}+v^1_{\s '}T^{\rho{\s'}{\ol\s}} + v^1_m T^{\rho m {\ol\s}}-(w^1_\rho+\ep^\rho v^1_{\ol \rho}+u^1_mT^{m\rho 1})T^{m1{\ol\s}}] \\
 &
   + \ep^{\rho} [- u^1_1\d^{\s{\ol\rho}}-u^1_{\rho'} T^{\rho'\s{\ol\rho}}-u^1_mT^{m\s{\ol\rho}}
]  \\
&
 + w_1^1\ep^\rho \d^{\s\ol{\rho}}
+w^1_\tau T^{\rho\s\t}+w^1_m T^{\rho\s m}-(w^1_{\rho}+\ep^{\rho} v^1_{\ol {\rho}}+u^1_mT^{m{\rho} 1})T^{m\s 1} 
-(\ep^\s u^1_{\ol \s} + w^1_\s  +v^1_mT^{\s m1} ) T^{\rho m1}  \\
& + \ep^{\rho} [    v^1_1\d^{\s{\ol\rho}}+v^1_{\rho '}T^{\s{\rho'}{\ol\rho}} + v^1_m T^{\s m {\ol\rho}}-(w^1_\s+\ep^\s v^1_{\ol \s}+u^1_mT^{m\s 1})T^{m1{\ol\rho}}] \\
 &
   + \ep^{\s} [ -u^1_1\d^{\rho{\ol\s}}-u^1_{\s'} T^{\s'\rho{\ol\s}}-u^1_mT^{m\rho{\ol\s}}
]  \\
&
 + w_1^1\ep^\s \d^{\rho\ol{\s}}
+w^1_\tau T^{\s\rho\t}+w^1_m T^{\s\rho m}-(w^1_{\s}+\ep^{\s} v^1_{\ol {\s}}+u^1_mT^{m{\s} 1})T^{m\rho 1} 
-(\ep^\rho u^1_{\ol \rho} + w^1_\rho  +v^1_mT^{\rho m1} ) T^{\s m1}  \\
&= \\
&
-  u^1_1 \d^{\rho{\ol\s}}(\ep^{\s} + \ep^{\rho}  )
+ u^1_{\ol \rho}\ep^\rho (  -  T^{\s m1} -  t^\s_B+\ep^\s  t^{\ol \s}_C )
 +  u^1_{\ol \s} \ep^\s ( -  T^{\rho m 1}+   \ep^\rho t^{\ol\rho}_C-  t^{\rho}_B)\\
 &-2u^1_2\d_{\s 2}\d_{\rho 2}T^{22(p+2)}- u^1_{\rho'} (\ep^\rho+\ep^\s) \d_{\s\ol\rho}t^{\rho'}_A\\
 &
+u^1_m(-\ep^\s T^{m\rho 1}T^{m1\ol\s}-2T^{m\rho 1}T^{m\s 1}-\ep^{\rho}T^{m\s{\ol\rho}}-\ep^{\s} T^{m\rho{\ol\s}}
-\ep^\rho T^{m\s 1}T^{m1\rho}-T^{\rho\s m}-  T^{\s\rho m})\\
&
+ v^1_1 \d^{\rho{\ol\s}}(\ep^{\s} + \ep^{\rho}  )
+v^1_{\ol {\rho}} \ep^{\rho} (-\ep^\s  T^{m1\ol \s}-  T^{m\s 1}+ \ep^\s t^{\ol\s}_C+  t^{\s}_A)
 + v^1_{\ol {\s}}\ep^{\s}(- T^{m\rho 1} - \ep^\rho T^{m1\ol\rho}+  t^\rho_A+ \ep^\rho  t^{\ol\rho}_C)\\
 &
 +2v^1_2\d_{\rho 2}\d_{\s 2} T^{22(p+2)}+v^1_{\s'}(\ep^\s+\ep^\rho)\d_{\rho\ol\s} t^{\s'}_B\\
 &
+v^1_m( \ep^\rho T^{m\s 1}T^{1m\ol\rho}+  \ep^{\s}T^{\rho m {\ol\s}}+
2T^{\s m 1}T^{\rho m1}+\ep^\rho T^{\s m {\ol\rho}}+\ep^\s T^{m\rho 1}T^{1m\ol\s}-T^{\rho\s m}-  T^{\s\rho m})
\\
&
  + w_1^1\d^{\s\ol{\rho}}(\ep^\rho +  \ep^\s   )
+  w^1_\rho(- \ep^{\s}T^{m1{\ol\s}}     - T^{m\s 1}     -   T^{\s m1}+t^\s_B+t^\s_A )
+ w^1_\s (  - T^{\rho m1}- \ep^{\rho}T^{m1{\ol\rho}} -  T^{m\rho 1}+t^\rho_A+t^\rho_B ) \\
&
+w^1_\tau \d_{\rho\ol\s}(\ep^\rho+\ep^\s)t^\t_C+ 2w^1_{p+2}\d_{p2}\d_{\s 2}T^{22(p+2)}
\end{align*}
Note that when $\rho=\ol \s$, $\ep^\rho+\ep^\s=0$, so all the $\d_{\rho\ol\s}(\ep^\s+\ep^\rho)$
terms are zero.

When $(\rho\s)=(2,2)$, if $T^{22(p+2)}\neq 0$ we obtain a relation involving $u^1_2,v^1_2,w^1_{p+2}$
so there can be no further relations when $T^{22(p+2)}\neq 0$.
Moreover, when  $T^{22(p+2)}\neq 0$, we must also have   for all $ \s$
\begin{align*}
0&= -T^{\s m1} -  t^\s_B+\ep^\s t^{\ol \s}_C\\
0&=- \ep^{\s}T^{m1{\ol\s}}     - T^{m\s 1}     -   T^{\s m1}+t^\s_B+t^\s_A\\
0&=-\ep^\s  T^{m1\ol \s}-  T^{m\s 1}+ \ep^\s t^{\ol\s}_C+  t^{\s}_A
\end{align*}
which we rewrite as
\begin{align*}
T^{ \s m1}&=0\\
t^{ \s}_B&= \ep^\s t^{\ol\s}_C\\
 t^\s_A&=\ep^{\s}T^{m1{\ol\s}} +T^{m\s 1}   -  \ep^\s t^{\ol\s}_C
 \end{align*}
 This is because the terms appear inside coefficients that involve $\rho$ only, so even though
 they also appear in the relation involving $u^1_2,v^1_2,w^1_{p+2}$, we can peel that away
 from the others to get the relation   
 $-u^1_2+v^1_2+w^1_{p+2}=0
 $.

\subsection{Step 5}
  Assume $T^{22(p+2)}\neq 0$
and  reconsider $(\rho\s\t)=(22(p+2))$ modulo $\fu$.
Recall that when $T^{22(p+2)}\neq 0$, relations are imposed upon
$x^2_3\hd x^{m-2}_2$ but $x^2_2\in \fsp(M)$ is still free.

\begin{align*}
  0\equiv  &
  -[w^{(p+2)}_2    ] (-1) t^{p+2}_C 
    -[w^{2}_2 + v^1_1 ] T^{22(p+2)}\\
 &-[w^{(p+2)}_2   ] t^{p+2}_C 
 -[w^{(p+2)}_2 ] t^{p+2}_C\\
&-[w^{2}_2+u^1_1  ]  T^{22(p+2)}  -[w^{(p+2)}_2 ] t^2_A\\
& + w^{(p+2)}_{2}t^2_B +w^{(p+2)}_{p+2}T^{22(p+2)} + w^{(p+2)}_{2}t^2_A  
\ \   \tmod \fn
  \\
  & \equiv  
   ( -2w^{2}_2  +w^{(p+2)}_{p+2})T^{22(p+2)}  \ \   \tmod \ \fu\\
 & \equiv  
   ( 3x^2_2+y^2_2)T^{22(p+2)}    \ \   \tmod  \fu\\
 & \equiv  
   ( 3x^2_2 )T^{22(p+2)}   \ \   \tmod  \fu
\end{align*} 
Recall that so far there was no relation on $x^2_2$ so we obtain a new relation and thus $T^{22(p+2)}=0$.

\subsection{Step 6}
We revisit $(\rho\s 1)+(\s\rho 1)$ which simplifies since $T^{22(p+2)}=0$:
\begin{align*}
0&=  
+ u^1_{\ol \rho}\ep^\rho (  -  T^{\s m1} -  t^\s_B+\ep^\s  t^{\ol \s}_C )
 +  u^1_{\ol \s} \ep^\s ( -  T^{\rho m 1}+   \ep^\rho t^{\ol\rho}_C-  t^{\rho}_B)\\
 &
+u^1_m(-\ep^\s T^{m\rho 1}T^{m1\ol\s}-2T^{m\rho 1}T^{m\s 1}-\ep^{\rho}T^{m\s{\ol\rho}}-\ep^{\s} T^{m\rho{\ol\s}}
-\ep^\rho T^{m\s 1}T^{m1\rho}-T^{\rho\s m}-  T^{\s\rho m})\\
&
+v^1_{\ol {\rho}} \ep^{\rho} (-\ep^\s  T^{m1\ol \s}-  T^{m\s 1}+ \ep^\s t^{\ol\s}_C+  t^{\s}_A)
 + v^1_{\ol {\s}}\ep^{\s}(- T^{m\rho 1} - \ep^\rho T^{m1\ol\rho}+  t^\rho_A+ \ep^\rho  t^{\ol\rho}_C)\\
 &
+v^1_m( \ep^\rho T^{m\s 1}T^{1m\ol\rho}+  \ep^{\s}T^{\rho m {\ol\s}}+
2T^{\s m 1}T^{\rho m1}+\ep^\rho T^{\s m {\ol\rho}}+\ep^\s T^{m\rho 1}T^{1m\ol\s}-T^{\rho\s m}-  T^{\s\rho m})
\\
&
+  w^1_\rho(- \ep^{\s}T^{m1{\ol\s}}     - T^{m\s 1}     -   T^{\s m1}+t^\s_B+t^\s_A )
+ w^1_\s (  - T^{\rho m1}- \ep^{\rho}T^{m1{\ol\rho}} -  T^{m\rho 1}+t^\rho_A+t^\rho_B )
\end{align*}

If any of the quantities
\begin{align}
\nonumber & -  T^{\s m1} -  t^\s_B+\ep^\s  t^{\ol \s}_C\\
\label{stara} & -\ep^\s  T^{m1\ol \s}-  T^{m\s 1}+ \ep^\s t^{\ol\s}_C+  t^{\s}_A\\
\nonumber &- \ep^{\s}T^{m1{\ol\s}}     - T^{m\s 1}     -   T^{\s m1}+t^\s_B+t^\s_A 
\end{align}
is not identically zero, we obtain $m-2$ relations and can have no more. In particular
at most one of these quantities is nonzero and if it is, there can be no further relations.

\bigskip

\subsection{Step 7}
Write the $6$ equations $(1\s m)\hd (m1\t)$
as
\begin{align*}
&X_{1\s m}+v^\s_1+w^m_\s=0\\
&X_{\s 1 m}+ u^\s_1+w^m_\s=0\\
&X_{m \s 1  }- \ep^\s u^m_{\ol \s}+v^\s_1=0\\
&X_{  \s  m1  }+u^\s_1+ \ep^\s v^m_{\ol \s} =0\\
&X_{1m\ol \s }+ v^m_{\ol \s}+w^{\ol\s}_1=0\\
&X_{m1\ol \s }+ u^m_{\ol \s}+w^{\ol\s}_1=0
\end{align*}
Then  
\begin{align*}
u^\s_1&= \frac 12[ -X_{1\s m}+X_{ \s 1m}-\ep^\s X_{m\s 1}+\ep^\s X_{ \s m1}+\ep^\s X_{1m\ol \s } +X_{m1\ol \s} ]\\
v^\s_1&= \frac 12[  X_{1\s m}-X_{ \s 1m}+\ep^\s X_{m\s 1}-\ep^\s X_{ \s m1}-\ep^\s X_{1m\ol \s } +X_{m1\ol \s} ] \\
w^{\ol \s}_1&=  \frac 12[  X_{1\s m}+X_{ \s 1m}+\ep^\s X_{m\s 1}-\ep^\s X_{ \s m1}-\ep^\s X_{1m\ol \s } -X_{m1\ol \s} ]\\
u^m_{\ol \s}&=  \frac 12[  X_{1\s m}+X_{ \s 1m}-\ep^\s X_{m\s 1}+\ep^\s X_{ \s m1}+\ep^\s X_{1m\ol \s } -X_{m1\ol \s} ]\\
v^m_{\ol \s}&= \frac 12[ -\ep^\s X_{1\s m}-\ep^\s X_{ \s 1m}+ X_{m\s 1}-  X_{ \s m1}+  X_{1m\ol \s } +\ep^\s X_{m1\ol \s} ] \\
w^m_\s&= \frac 12[ \ep^\s X_{1\s m}+\ep^\s X_{ \s 1m}+ X_{m\s 1}+  X_{ \s m1}-  X_{1m\ol \s } -\ep^\s X_{m1\ol \s} ]
\end{align*}

Observe that
\begin{align*}
&X_{1\s m}\equiv 0 \tmod\fn \\
&X_{\s 1 m}\equiv 0 \tmod\fn \\
&X_{m \s 1  }\equiv -x^{\s'}_\s T^{m\s' 1}+[-(u^1_1+v^1_1+2w^1_1)+\frac 12 \ep^{\s}(u^1_1+v^1_1-w^1_1)]T^{m\s 1} \tmod\fn \\
&X_{  \s  m1  }\equiv -x^{\s'}_\s T^{ \s' m1}
+[-(u^1_1+v^1_1+2w^1_1)+\frac 12 \ep^{\s}(u^1_1+v^1_1-w^1_1)]T^{\s m  1}  \tmod\fn \\
&X_{1m\ol \s }\equiv 0 \tmod\fu \\
&X_{m1\ol \s }\equiv -\ep^{\s}\ep^{\s'} x^{\s'}_\s T^{m1\s' } 
 +[- 2w^1_1 +\frac 12 \ep^{\s}(u^1_1+v^1_1-w^1_1)]T^{m1\s } \tmod\fn 
\end{align*}

\subsection{Step 8}
Now consider $(\rho\s\t)\tmod \fu$:   
\begin{align*}
0\equiv &\frac 12[ \ep^\rho  x^{\s'}_\rho T^{m\s' 1} 
-\ep^\rho x^{\s'}_\rho T^{ \s' m1}    -\ep^{\rho}\ep^{\s'} x^{\s'}_\rho T^{m1\s' } ]\d_{\s\t}\\
+
&\frac 12[  - \ep^\s  x^{\s'}_\s T^{m\s' 1}+\ep^\s  x^{\s'}_\s T^{ \s' m1}  -\ep^{\s}\ep^{\s'} x^{\s'}_\s T^{m1\s' }  ]\d_{\rho\t}\\
+
& \frac 12[  -\ep^{\t}  x^{\s'}_{\t}  T^{m\s' 1} +\ep^{\t}   x^{\s'}_\t T^{ \s' m1}  +\ep^{\t}\ep^{\s'} x^{\s'}_\t T^{m1\s' }  ]\d_{\ol\s\rho}\\
&-x^{\s'}_\rho \d_{\s\t}t^{\s'}_A
-x^{\s'}_\s \d_{\rho\t} t^{\s'}_B
-x_\t^{\s'}\d_{\ol\s \rho}\ep^{\rho}t^{\s'}_C\\
&+  [-\ep^\rho[-(u^1_1+v^1_1+2w^1_1)+\frac 12 \ep^{\rho}(u^1_1+v^1_1-w^1_1)]T^{m\rho 1}
+\ep^\rho[-(u^1_1+v^1_1+2w^1_1)+\frac 12 \ep^{\rho}(u^1_1+v^1_1-w^1_1)]T^{\rho m  1} \\
&
+[- 2w^1_1 +\frac 12 \ep^{\rho}(u^1_1+v^1_1-w^1_1)]T^{m1\rho }]\d_{\s\t}\\
&
+[\ep^\s[-(u^1_1+v^1_1+2w^1_1)+\frac 12 \ep^{\s}(u^1_1+v^1_1-w^1_1)]T^{m\s 1}
-\ep^\s[-(u^1_1+v^1_1+2w^1_1)+\frac 12 \ep^{\s}(u^1_1+v^1_1-w^1_1)]T^{\s m  1} \\
&
+[- 2w^1_1 +\frac 12 \ep^{\s}(u^1_1+v^1_1-w^1_1)]T^{m1\s }]\d_{\rho\t}\\
&+  [ \ep^\t[-(u^1_1+v^1_1+2w^1_1)+\frac 12 \ep^{\t}(u^1_1+v^1_1-w^1_1)]T^{m\t 1}
-\ep^\t[-(u^1_1+v^1_1+2w^1_1)+\frac 12 \ep^{\t}(u^1_1+v^1_1-w^1_1)]T^{\t m  1} \\
&
-[- 2w^1_1 +\frac 12 \ep^{\t}(u^1_1+v^1_1-w^1_1)]T^{m1\t }]\d_{\ol\s\rho}\\
&=\\
&
x^{\s'}_\rho\d_{\s\t}[-t^{\s'}_A+\frac 12( \ep^\rho    T^{m\s' 1}-
\ep^\rho    T^{ \s' m1}    -\ep^{\rho}\ep^{\s'}   T^{m1\s' } )]\\
&+
x^{\s'}_\s\d_{\rho\t}[ -t^{\s'}_B+ \frac 12( - \ep^\s   T^{m\s' 1}+\ep^\s     T^{ \s' m1}  -\ep^{\s}\ep^{\s'}  T^{m1\s' })  ]\\
&
+x^{\s'}_{\t} \d_{\ol\s\rho}[ -\ep^{\rho}t^{\s'}_C+\frac 12(  -\ep^{\t}   T^{m\s' 1} +\ep^{\t}    T^{ \s' m1}  +\ep^{\t}\ep^{\s'}  T^{m1\s' })  ]
\\
&
(u^1_1+v^1_1)[(\ep^\rho +\frac 12)  T^{m\rho 1}
+(-\ep^\rho +\frac 12 )T^{\rho m  1} 
 +\frac 12 \ep^{\rho} T^{m1\rho }]\d_{\s\t}\\
 &+
w^1_1[(-2\ep^\rho -\frac 12 )T^{m\rho 1}+(-2\ep^\rho-\frac 12)T^{\rho m  1} 
+(-2-\frac 12 \ep^{\rho}T^{m1\rho })
]\d_{\s\t}\\
&
+(u^1_1+v^1_1)[(-\ep^\s +\frac 12 \ep^{\s})T^{m\s 1}
+(\ep^\s-\frac 12 \ep^{\s})T^{\s m  1}   +\frac 12 \ep^{\s} T^{m1\s }]\d_{\rho\t}\\
&
+w^1_1[(2\ep^\s -\frac 12 \ep^{\s})T^{m\s 1}
+\ep^\s(2+\frac 12 \ep^{\s})T^{\s m  1}  +( - 2 -\frac 12 \ep^{\s})T^{m1\s }]\d_{\rho\t}\\
&+ (u^1_1+v^1_1) [ (-\ep^\t +\frac 12 \ep^{\t})T^{m\t 1}
-\ep^\t(-1+\frac 12 \ep^{\t})T^{\t m  1} - \frac 12 \ep^{\t}T^{m1\t }]\d_{\ol\s\rho}\\
&+ w^1_1 [(2-\frac 12 \ep^{\t})T^{m\t 1}
+(-2\ep^\t -\frac 12 \ep^{\t})T^{\t m  1}+(  2 +\frac 12 \ep^{\t})T^{m1\t }]\d_{\ol\s\rho} 
\end{align*}
 We already know that at least two of the quantities from \eqref{stara} must be zero.
 If exactly two are zero then all the terms in brackets above must be zero and
 we conclude all six quantities are zero (for all $\s$). If all three of the quantities are
 zero then we are allowed $m-2$ relations among the elements of $\fsp(M)\op \ft$.
 Note each term in brackets with the $x$'s  is two possible relations, depending on whether,
 e.g., in the first $\rho<p+1$ or $\rho\geq p+1$ and each non-vanishing imposes $\frac{m-2}2$ relations.
 Thus two of the six can fail to be identities. Then the remaining four identities plus the previous
 three will be enough to set all quantities to zero. 

 We conclude
$t^\rho_A,t^\s_B,t^\t_C, T^{m1\s},T^{m\s 1},  T^{\s m 1}$ are all zero.
 
\subsection{Step 9}
We now show $T^{\rho\s m}$ and permutations are zero.
We may now solve the $(1mm),(m1m),(mm1)$ equations to
obtain  $u^m_1, v^m_1,w^m_1\equiv 0\tmod \fn$.
Using this we obtain from the $(\rho\s m)$ equations
$$
-x^{\rho'}_\rho T^{\rho'\s m}-x^{\s'}_\s T^{\rho \s' m}\equiv 0 \tmod \fu.
$$
Writing $M\ot M=S^2M\op \langle \Lambda\rangle \op  \La 2 M_0$,
where $\La 2 M_0\op  \langle \Lambda\rangle=\La 2 M$, arguing as before, we immediately
see that the component of $T^{\rho\s m}a_\rho\ot b_\s$ lying in $\La 2 M_0$
must be zero and the component in $S^2M$ must be a highest weight vector, i.e.,
$$
T^{\rho\s m}a_\rho\ot b_\s=T^{22m}a_2\ot b_2\ot c_m\op \ep^\rho t^{** m}\sum_\rho a_\rho\ot b_{\ol\rho}\ot c_m
$$
and if $T^{22m}\neq 0$, there can be at most two more relations imposed.
Similarly, using the $(\rho m\t)$ and $(m\s\t)$ equations, and writing, as a $\fsp(M)$-module, 
$M^*\ot M=M\ot M=S^2M\op \langle \Lambda\rangle \op  \La 2 M_0$, we obtain
\begin{align*}
T^{\rho m\t}a_\rho\ot c_\t &=T^{2m(p+2)}a_2\ot b_m\ot c_{p+2}\op   t^{* m*  }\sum_{\rho}a_\rho\ot b_{m}\ot c_\rho\\
T^{m\s\t}b_\s\ot c_\t&=T^{m2(p+2)}a_m\ot b_2\ot c_{p+2}\op   t^{m**}\sum_\s a_m\ot b_{\s}\ot c_\s
\end{align*}

If any one of $T^{22m},T^{2m(p+2)},T^{m2(p+2)}$
  is nonzero we immediately get $m-3$ relations and just need two more.

Reonsider $(\rho\s\t)$ 
\begin{align*}
  0=& u_{1}^{\rho} \delta^{\sigma\tau} + v_1^{\sigma} \delta^{\rho\tau} + w_1^\tau \ep^\rho\d^{\ol{\s}\rho}\\
& -(w^1_{\rho}+\ep^{\rho} v^1_{\ol {\rho}} )\d_{\s\t}t^{m**}-(w^1_{\rho}+\ep^{\rho} v^1_{\ol {\rho}} )\d_{\s 2}\d_{\t p+2}T^{m2(p+2)}\\
& 
 -(\ep^\s u^1_{\ol \s} + w^1_\s  )\d_{\rho\t} t^{* m*}  -(\ep^\s u^1_{\ol \s} + w^1_\s  )\d_{\rho 2}\d_{\t(p+2)}T^{2 m(p+2)} \\
&  -(u^1_\t+v^1_\t )\d_{\s\ol\rho} \ep^\rho t^{** m} -(u^1_\t+v^1_\t )\d_{\rho 2}\d_{\s 2}T^{22 m} 
\end{align*}
Respectively taking $(\rho\s\t)$ to be $(\rho 2(p+2))$ with $\rho\neq p+2$, $(2\s (p+2))$
with $\s\neq p+2$,  and $(22\t)$ with $\t\neq 2$, we see
$T^{m2(p+2)}=0$, $T^{2m(p+2)}$, and $T^{22m}=0$.

At this point 
\begin{align*}
u^\rho_1&= \frac 12[  \ep^\rho u^1_\rho t^{m**}+\ep^\rho v^1_\rho t^{**m}-\ep^\rho w^1_\rho t^{m**}+\ep^\rho w^1_\rho t^{*m*}+\ep^\rho u^1_{\ol\rho}t^{*m*}+v^1_{\ol\rho}t^{m**} ]\\
v^\s_1&= \frac 12[  -\ep^\s u^1_\s t^{m**}-\ep^\s v^1_\s t^{**m}+\ep^\s w^1_\s t^{m**}-\ep^\s w^1_\s t^{*m*}-\ep^\s u^1_{\ol\s}t^{*m*}+v^1_{\ol\s}t^{m**} ] \\
w^{\ol \t}_1&=  \frac 12[  -\ep^\t u^1_\t t^{m**}+\ep^\t v^1_\t t^{**m}+\ep^\t w^1_\t t^{m**}-\ep^\t w^1_\t t^{*m*}-\ep^\t u^1_{\ol\t}t^{*m*}-v^1_{\ol\t}t^{m**} ]
\end{align*}

Subbing in to $(\rho\s\ol\t)$:

\begin{align*}
  0=& \frac 12[  \ep^\rho u^1_\rho t^{m**}+\ep^\rho v^1_\rho t^{**m}-\ep^\rho w^1_\rho t^{m**}+\ep^\rho w^1_\rho t^{*m*}+\ep^\rho u^1_{\ol\rho}t^{*m*}+v^1_{\ol\rho}t^{m**} ] \delta^{\sigma\ol\tau} \\
&  +\frac 12[  -\ep^\s u^1_\s t^{m**}-\ep^\s v^1_\s t^{**m}+\ep^\s w^1_\s t^{m**}-\ep^\s w^1_\s t^{*m*}-\ep^\s u^1_{\ol\s}t^{*m*}+v^1_{\ol\s}t^{m**} ] \delta^{\rho\ol\tau}\\
  & +  \frac 12[  -\ep^\t u^1_\t t^{m**}+\ep^\t v^1_\t t^{**m}+\ep^\t w^1_\t t^{m**}-\ep^\t w^1_\t t^{*m*}-\ep^\t u^1_{\ol\t}t^{*m*}-v^1_{\ol\t}t^{m**} ] \ep^\rho\d^{\ol{\s}\rho}\\
& -(w^1_{\rho}+\ep^{\rho} v^1_{\ol {\rho}} )\d_{\s\ol\t}t^{m**}   -(\ep^\s u^1_{\ol \s} + w^1_\s  )\d_{\rho\ol\t} t^{* m*}   -(u^1_{\ol\t}+v^1_{\ol\t })\d_{\s\ol\rho} \ep^\rho t^{** m}   
\end{align*}

Recall that if any of $t^{m**},t^{*m*},t^{**m}$ are nonzero, we have $m-3$ relations imposed on 
the elements of $\fsp(M)$ so we only need to show these equations imply at least two more.
Say $t^{m**}\neq 0$, then for $\rho\neq \s,\ol\s$ and taking $\t=\ol\s$, we get at least $\frac 12(m-4)$ relations
among the $u^1_\rho$ (in fact more unless $t^{m**}=t^{*m*}$)  and are done because $\frac 32 m-5\geq m-1$ as $m\geq 7$ and $m$ is even. The other two cases are similar. 
 
\subsection{Step 10}
Reconsider
\begin{align*}
(\rho\s m)\ \ 0&= u^\rho_mT^{m\s m}+v^\s_mT^{\rho mm}\\
&= -(w^1_\rho +\ep^\rho v^1_{\ol\rho})T^{m\s m} -(\ep^\s u^1_{\ol\s} + w^1_\s)T^{\rho mm}\\
(\rho m\tau )\ \ 0&= u^\rho_mT^{mm\t}+w^\t_mT^{\rho mm}\\
&= -(w^1_\rho +\ep^\rho v^1_{\ol\rho})T^{mm\t} -(  u^1_\t + v^1_\t)T^{\rho mm}\\
(m\s\t)\ \ 0&=  v^\s_mT^{mm\t}+ w^\t_m T^{m\s m}\\
&= -(w^1_\rho +\ep^\rho v^1_{\ol\rho})T^{m\s m} -(  u^1_\t + v^1_\t)T^{m\s m}
\end{align*}
Note that if $T^{\rho mm}\neq 0$ for some $\rho$, then the $u^1_{\ol\s}$ in the first
line and the $u^1_\t$ in the second give $2(m-2)$ relations, eliminating this case.
Similarly,  if any of $T^{\rho mm}$, $T^{m\s m}$, $T^{mm\t}$ are nonzero, the
case is eliminated from consideration.

\subsection{Step 11}
\begin{align*}
(\rho mm)\ \ 0&= u^\rho_m T^{mmm} \\
&=
-(w^1_\rho+\ep^\rho v^1_{\ol\rho}) T^{mmm}
\end{align*}
\begin{align*}
(m\s m)\ \ 0&= v^s_m T^{mmm}\\
 &=
  -(\ep^\s  u^1_{\ol\s} + w^1_\s)T^{mmm} \\
 & \\
 (mm\t)  \ \ 
 0&= -(u^1_\t+v^1_\t)T^{mmm}
  \end{align*}
So if $T^{mmm}\neq 0$ we are eliminated from consideration.

\qed

\section{Border rank bounds}\label{brbnds}
The rank of a tensor $T$, denoted $\bold R(T)$,  is the smallest $r$ such that $T$ may
be written as a sum of $r$ rank one tensors, and the border rank, denoted $\ur(T)$,
is the smallest $r$ such that $T$ is a limit of rank $r$ tensors.
Rank and border rank are standard measures of the complexity of a tensor.
  Strassen \cite{StrassenGaussEl} showed that the exponent $\omega$ of 
matrix multiplication may be defined as the infimum over $\tau$ such that 
$\bold 
R(\Mn)=O(\nnn^{\tau })$, and Bini \cite{MR605920} showed   one may  use  the 
border rank $\uR(\Mn)$ rather than the rank $\bfR(\Mn)$ in the definition. The 
tensor $T_{CW,m-2}$ has the minimal possible border rank $m$ for any concise 
tensor, which is important for its use in proving upper bounds on $\omega$.

\begin{remark}
The tensor of Proposition \ref{symthma}
satisfies $\bfR(a_1\ot (\sum_{j=1}^m b_j\ot c_j))=\uR(a_1\ot (\sum_{j=1}^m 
b_j\ot c_j))=m$.
\end{remark}

Let $T=S_\bbta$ be a skeletal tensor. Change bases in $B$ by permuting $b_1$ with $b_m$.
This will have the advantage that $T(\a^1)=\Id$.
Explicitly
$$
T(A^*)=
\begin{pmatrix}
\a^1&\a^2&\a^3&\cdots & & \a^m\\
&\a^1   & & & &\phi^2\\
& & \ddots & & &\phi^3\\
 & & &   & &\vdots\\
  & & &   & \ddots &\phi^{m-1}\\
  & & & & &\a^1
\end{pmatrix}, 
$$
where $\ol\phi =\ol\bbta(\a^2\hd \a^{m-1})^{\bt}$, and  $\ol\bbta$ is the matrix of $\bbta$.
Let $Y=\sum_{s=2}^{m-2}y_s\a^s$ and $Z=\sum_{s=2}^{m-2}z_s\a^s$
for constants $\ol y=(y_2\hd y_{m-1})$ and $\ol z=(z_2\hd z_{m-1})$.
Applying Strassen's commutation equations, the $(1,m)$ entry of $[T(Y),T(Z)]$
is the only potential nonzero entry and it is
$\bbta(\ol y,\ol z)-\bbta(\ol z,\ol y)$.
Note that if $\bbta$ is symmetric, then $T$ is isomorphic to $T_{CW}$. We conclude:

\begin{proposition}\label{brbndprop} Let
$T$ be skeletal. Then $\ur(T)\geq m+1$ unless $T$ is isomorphic to the big Coppersmith-Winograd tensor.
\end{proposition}

\begin{corollary} None of  $T_{skewCW,m-2}$, $T_{s\oplus skewCW,m-2}$, $T_{s+skewCW,m}$
have minimal border rank $m$.
\end{corollary}
  
\begin{corollary}\label{newcwchar}  Let $T\in \BC^m\ot \BC^m\ot \BC^m$ be 
$1$-generic and either symmetric or of minimal border rank. Then $\dim G_T\leq 
\binom{m+1}{2}$  with equality holding only for $T_{CW,m-2}$. 
\end{corollary}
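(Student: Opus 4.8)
The plan is to combine the classification in Theorem \ref{symthmc} with the border rank result in Proposition \ref{brbndprop} and its corollary. First I would dispose of the symmetric case. Recall that $T_{mcIsym,m}$, $T_{CW,q}$ and $T_{cw,q}$ are, by the cited result of \cite{MR3682743}, essentially the only symmetric $1$-generic tensors with large symmetry; more directly, a symmetric $1$-generic tensor is the structure tensor of a Gorenstein Artinian algebra, and its symmetry group lies inside the symmetric analogue, so one checks against the list. Since $\dim G_{T_{CW,m-2}} = \binom{m+1}{2}$ while $\dim G_{T_{mcIsym,m}} = \binom{m-1}{2}$ and $\dim G_{T_{cw,m-2}} = \binom{m-2}{2}+1$ are both strictly smaller, among symmetric $1$-generic tensors the bound $\dim G_T \le \binom{m+1}{2}$ holds with equality only for $T_{CW,m-2}$. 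Actually the cleanest route: any symmetric $1$-generic tensor is in particular $1$-generic, so it already falls under Theorem \ref{symthmc}, and we may simply invoke the case analysis of that theorem directly — see below.

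Next I would handle the minimal border rank case, which is where Proposition \ref{brbndprop} does the work. Suppose $T\in \BC^m\ot\BC^m\ot\BC^m$ is $1$-generic of minimal border rank $m$ with $\dim G_T$ as large as possible. By Theorem \ref{symthmc} (applicable since $m\ge 14$ is implicit — one should note the statement of this corollary inherits that hypothesis, or remark that for small $m$ the claim is checked separately), the only $1$-generic tensors with $\dim G_T > \binom{m+1}{2} - (m-2)$ roughly, and certainly the only ones with $\dim G_T \ge \binom{m+1}{2}$, are: in the even case $T_{max,even,m}$ (with $\dim G_T = \frac{m^2}{2}+\frac{3m}{2}-2 = \binom{m+1}{2}+(m-2)$) and $T_{CW,m-2}$ (with $\dim G_T = \binom{m+1}{2}$); in the odd case $T_{max,odd,skew,m}$ and $T_{CW,m-2}$, both with $\dim G_T = \frac{m^2}{2}+\frac{m}{2} = \binom{m+1}{2}$, and then $T_{max-1,odd,m}$ with one less. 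So the only candidates for equality $\dim G_T = \binom{m+1}{2}$ are $T_{CW,m-2}$ in all parities, plus $T_{max,odd,skew,m}$ when $m$ is odd; and the only tensor exceeding $\binom{m+1}{2}$ at all is $T_{max,even,m}$. The Corollary preceding this statement records that $T_{max,even,m}$, $T_{max,odd,skew,m}$, $T_{max-1,odd,m}$ all fail to have minimal border rank $m$: indeed each is of the shape covered by Proposition \ref{brbndprop} (their $T(A^*)$, after the standard identification using $T(\a^1)$, has the displayed form with some $\phi^{s_0}$ not a multiple of $\a^{s_0}$, because of the skew block ${\mathcal B}\ot c_m$ or ${\mathcal B}\ot c_1$), hence has border rank $\ge m+1$.

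Therefore, if $T$ is $1$-generic and of minimal border rank, $T$ cannot be $T_{max,even,m}$, nor (in odd $m$) $T_{max,odd,skew,m}$, nor $T_{max-1,odd,m}$; so either $\dim G_T < \binom{m+1}{2}$ (when $T$ is none of the listed maximal-symmetry tensors), or $T \cong T_{CW,m-2}$, which does have minimal border rank $m$ as recorded in Section \ref{brbnds}. In the symmetric case the same conclusion holds: a symmetric $1$-generic tensor is $1$-generic, and running through Theorem \ref{symthmc} the tensors $T_{max,even,m}$, $T_{max,odd,skew,m}$, $T_{max-1,odd,m}$ are visibly not symmetric (the block ${\mathcal B}$ is skew-symmetric, not symmetric), so a symmetric tensor with $\dim G_T \ge \binom{m+1}{2}$ must be $T_{CW,m-2}$. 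This gives $\dim G_T \le \binom{m+1}{2}$ with equality exactly for $T_{CW,m-2}$ in both cases. The main obstacle, and the only genuinely nontrivial input beyond bookkeeping, is Proposition \ref{brbndprop}: verifying that $T_{max,even,m}$ and the odd-case tensors really do present in the normal form to which Strassen's commutator obstruction applies — i.e. that after using a maximal-rank element of $T_A(A^*)$ to normalize, the remaining pencil has an off-diagonal entry forcing a nonvanishing commutator. Once that presentation is in hand the argument is immediate, and everything else is assembling the dimension counts from Theorem \ref{symthmc}.
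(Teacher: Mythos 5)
Your proposal is correct and takes essentially the same route the paper intends: the corollary is an immediate assembly of Theorem \ref{symthmc} (for $m\ge 14$ the only $1$-generic tensors with $\dim G_T\ge\binom{m+1}{2}$ are $T_{CW,m-2}$ together with $T_{max,even,m}$, resp.\ $T_{max,odd,skew,m}$) with the corollary of Proposition \ref{brbndprop}, which removes the skew tensors in the minimal border rank case, while $T_{CW,m-2}$ itself has border rank $m$. The only point to state more carefully than ``visibly not symmetric'' is that one must exclude a symmetric tensor from the whole $GL_m^{\times 3}$-orbit of $T_{max,even,m}$ or $T_{max,odd,skew,m}$, not just from the displayed presentation --- for instance because for a symmetric $1$-generic $T$ the bilinear form ${\mathcal B}$ arising in the classification may be taken symmetric, or because the skew block obstructs simultaneously symmetrizing the slice space $T(C^*)$ --- a step the paper likewise leaves implicit.
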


Numerical computations using ALS methods (see \cite{MR3926208}) indicate, at 
least
for $m\leq 11$, that $\ur(T_{s\oplus skewCW,m-2})\leq \frac{3m}2-\frac 1 2$
and for $m\leq 14$ that  $\ur(T_{skewCW,m-2})\leq \frac{3m}2-1$. 

The following Corollary first appeared in \cite{hoyois2021hermitian} with a proof
at the level of deformations of algebras. It was proved, but not
observed, in an earlier draft of this paper:

\begin{corollary}\label{degen2CW} Let $T$ be $1$-generic and of minimal border rank. Then $T$ degenerates
to $T_{CW,m-1}$.
\end{corollary}

As remarked in a special case of Corollary \ref{degen2CW} in  \cite{homs2021bounds}, Corollary \ref{degen2CW} indicates
that there should be room for improvement with Strassen's laser method.
This  is important for Strassen's laser method, as it says   $1$-degenerate minimal
border rank tensors that are not isomorphic to a Coppersmith-Winograd tensor are subject to
barriers no worse than the Coppersmith-Winograd tensor.
 
\section{Other tensors}\label{examplesect}

We briefly describe the symmetry Lie algebras of other tensors used in the 
laser 
method and a related tensor.
  
  \begin{example} [Strassen's tensor] \label{strassenten} The following is the 
first tensor that was used in the laser method:
  $T_{str,q}=\sum_{j=1}^q a_0\ot b_j\ot
  c_j + a_j\ot b_0\ot c_j\in \BC^{q+1}\ot \BC^{q+1}\ot \BC^q$.
  Then, with blocking $(1,q)\times (1,q)$ in the first two matrices,
  $$
  \tilde\fg_{T_{str,q}}=
  \left\{ \lambda\Id+\begin{pmatrix} 0 & y \\ 0 & X\end{pmatrix},  \mu\Id+ \begin{pmatrix} 0 &
  y \\ 0 & X\end{pmatrix}, \nu\Id+(-X^t)
  \mid X\in \fgl(q), x\in \BC^q, \lambda+\mu+\nu=0\right\}.
  $$
  In particular, $\tdim (\fg_{T_{str,q}})=q^2+q$.    \end{example}
  
  \begin{example}[The small Coppersmith-Winograd tensor]\label{smallcw} Another 
tensor used in 
the laser method is the small Coppersmith-Winograd tensor:
  $T_{cw,q}=\sum_{j=1}^q a_0\ot b_j\ot c_j + a_j\ot b_0\ot c_j+a_j\ot b_j\ot
  c_0\in (\BC^{q+1})^{\ot 3}$.
    Then with blocking  $(1,q )\times
  (1,q )$:
  $$
  \tilde \fg_{T_{cw,q}}=
  \left\{ \left( \begin{pmatrix}-\mu-\nu & 0 \\ 0&\lambda\Id+ X \end{pmatrix},
  \begin{pmatrix}-\lambda-\nu & 0 \\ 0&\mu\Id+  X \end{pmatrix},
  \begin{pmatrix}-\lambda-\mu& 0 \\ 0&\nu\Id+  X \end{pmatrix}\right)
  \mid \lambda,\mu,\nu\in \BC
    X\in \fso(q)\right\}.
  $$
  In particular $\tdim \fg_{T_{cw,q}}=\binom{q}2+1$.  \end{example}

\bibliographystyle{amsplain}

\bibliography{bibSymTensors}

\end{document}